\documentclass[10pt,draft]{article}
\usepackage{amsmath,amscd}
\usepackage{amssymb,latexsym,amsthm}
\usepackage{color}
\usepackage[spanish,english]{babel}
\usepackage{amssymb}
\usepackage{color}
\usepackage[latin1]{inputenc}
\usepackage{lscape}
\usepackage{fancyhdr}
\usepackage{pb-diagram}
\usepackage{amsfonts}

\numberwithin{equation}{section}
\newtheorem{theorem}{Theorem}[section]

\newtheorem{definition}[theorem]{Definition}
\newtheorem{proposition}[theorem]{Proposition}
\newtheorem{lemma}[theorem]{Lemma}

\theoremstyle{definition}

\newtheorem{remark}[theorem]{Remark}

\hoffset-0.3in
\voffset-1.3cm \setlength{\oddsidemargin}{9mm}
\setlength{\textheight}{21.3cm}\setlength{\textwidth}{16cm}
\makeatletter
\def\@roman#1{\romannumeral #1}
\makeatother

\title{\textbf{Filtered-graded transfer \\ of noncommutative Gröbner bases}}
\author{Claudia Gallego\\
\texttt{cmgallegoj@unal.edu.co}
\\ Seminario de Álgebra Constructiva - SAC$^2$\\ Departamento de Matemáticas\\ Universidad Nacional de
Colombia, Sede Bogot\'a}
\date{}
\begin{document}
\maketitle
\begin{abstract}
\noindent
As the case of free $\Bbbk$-algebras and $PBW$ algebras, given a bijective skew $PBW$ extension $A$, we will show that it is possible transfer Gröbner bases between $A$ and its associated graded ring.
\bigskip

\noindent
\textit{Key words and phrases.} Noncommutative Gröbner bases; skew $PBW$ extensions;
filtered module; graded module.

\bigskip

\noindent 2010 \textit{Mathematics Subject Classification.} Primary: 16Z05.  Secondary: 16W70, 16W50.
\end{abstract}
%%%%%%%%%%%%%%%%%%%%%%%%%%%%%%%%%%%%%%%%%%%%%%%%%
\section{Introduction}
\noindent
In \cite{Li} it was shown that if $A=\Bbbk[a_i]_{i\in \Lambda}$ is a $\Bbbk$-algebra generated by $\{a_i\}_{i\in \Lambda}$ over the field $\Bbbk$, and $I$ a left ideal of $A$, then a nonempty subset $G$ of $I$ is a Gröbner basis for $I$ if, and only if, $\overline{G}$ is a Gröbner basis of $Gr(I)$, where $\overline{G}$ denotes the image of $G$ in $Gr(A)$ and $Gr(I)$ is the left ideal associated to $I$ in $Gr(A)$. A similar fact is proved in \cite{Gomez-Torrecillas2} for the case of $PBW$ algebras. We will present an analogous result for skew $PBW$ extensions,  another class of noncommutative rings and algebras of polynomial type that generalize classical $PBW$ extensions and include many important types of quantum algebras.

\section{Skew $PBW$ extensions}\label{definitionexamplesspbw}
\noindent
In this section we recall the definition of skew $PBW$ (Poincaré-Birkhoff-Witt) extensions defined
firstly in \cite{Gallego2}, and we will review also some basic properties about the polynomial
interpretation of this kind of noncommutative rings. Two particular subclasses of these extensions
are recalled also.
\begin{definition}\label{gpbwextension}
Let $R$ and $A$ be rings. We say that $A$ is a \textit{skew $PBW$ extension of $R$} $($also called
a $\sigma-PBW$ extension of $R$$)$ if the following conditions hold:
\begin{enumerate}
\item[\rm (i)]$R\subseteq A$.
\item[\rm (ii)]There exist finite elements $x_1,\dots ,x_n\in A$ such $A$ is a left $R$-free module with basis
\begin{center}
${\rm Mon}(A):= \{x^{\alpha}=x_1^{\alpha_1}\cdots x_n^{\alpha_n}\mid \alpha=(\alpha_1,\dots
,\alpha_n)\in \mathbb{N}^n\}$.
\end{center}
\item[\rm (iii)]For every $1\leq i\leq n$ and $r\in R-\{0\}$ there exists $c_{i,r}\in R-\{0\}$ such that
\begin{equation}\label{sigmadefinicion1}
x_ir-c_{i,r}x_i\in R.
\end{equation}
\item[\rm (iv)]For every $1\leq i,j\leq n$ there exists $c_{i,j}\in R-\{0\}$ such that
\begin{equation}\label{sigmadefinicion2}
x_jx_i-c_{i,j}x_ix_j\in R+Rx_1+\cdots +Rx_n.
\end{equation}
Under these conditions we will write $A:=\sigma(R)\langle x_1,\dots ,x_n\rangle$.
\end{enumerate}
\end{definition}

A particular case of skew $PBW$ extension is when all derivations $\delta_i$ are zero. Another
interesting case is when all $\sigma_i$ are bijective and the constants $c_{ij}$ are invertible. We
recall the following definition (cf. \cite{Gallego2}).
\begin{definition}\label{sigmapbwderivationtype}
Let $A$ be a skew $PBW$ extension.
\begin{enumerate}
\item[\rm (a)]
$A$ is quasi-commutative if the conditions {\rm(}iii{\rm)} and {\rm(}iv{\rm)} in Definition
\ref{gpbwextension} are replaced by
\begin{enumerate}
\item[\rm (iii')]For every $1\leq i\leq n$ and $r\in R-\{0\}$ there exists $c_{i,r}\in R-\{0\}$ such that
\begin{equation}
x_ir=c_{i,r}x_i.
\end{equation}
\item[\rm (iv')]For every $1\leq i,j\leq n$ there exists $c_{i,j}\in R-\{0\}$ such that
\begin{equation}
x_jx_i=c_{i,j}x_ix_j.
\end{equation}
\end{enumerate}
\item[\rm (b)]$A$ is bijective if $\sigma_i$ is bijective for
every $1\leq i\leq n$ and $c_{i,j}$ is invertible for any $1\leq i<j\leq n$.
\end{enumerate}
\end{definition}
The skew $PBW$ extensions can be characterized in a similar way as it
was done in \cite{Gomez-Torrecillas2} for $PBW$ rings  (see Proposition 2.4 there in).
\begin{theorem}\label{coefficientes}
Let $A$ be a left polynomial ring over $R$ w.r.t.. $\{x_1,\dots,x_n\}$. $A$ is a skew $PBW$
extension of $R$ if and only if the following conditions hold:
\begin{enumerate}
\item[\rm (a)]For every $x^{\alpha}\in Mon(A)$ and every $0\neq
r\in R$ there exist unique elements
$r_{\alpha}:=\sigma^{\alpha}(r)\in R-\{0\}$ and $p_{\alpha ,r}\in
A$ such that
\begin{equation}\label{611}
x^{\alpha}r=r_{\alpha}x^{\alpha}+p_{\alpha , r},
\end{equation}
where $p_{\alpha ,r}=0$ or $\deg(p_{\alpha ,r})<|\alpha|$ if
$p_{\alpha , r}\neq 0$. Moreover, if $r$ is left invertible, then
$r_\alpha$ is left invertible.

\item[\rm (b)]For every $x^{\alpha},x^{\beta}\in Mon(A)$ there
exist unique elements $c_{\alpha,\beta}\in R$ and
$p_{\alpha,\beta}\in A$ such that
\begin{equation}\label{612}
x^{\alpha}x^{\beta}=c_{\alpha,\beta}x^{\alpha+\beta}+p_{\alpha,\beta},
\end{equation}
where $c_{\alpha,\beta}$ is left invertible, $p_{\alpha,\beta}=0$
or $\deg(p_{\alpha,\beta})<|\alpha+\beta|$ if
$p_{\alpha,\beta}\neq 0$.
\end{enumerate}
\end{theorem}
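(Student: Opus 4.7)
The plan is to prove both directions by induction on total degree, exploiting the freeness of ${\rm Mon}(A)$ as a left $R$-basis of $A$ to extract uniqueness of the decompositions (\ref{611}) and (\ref{612}).

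For the \emph{necessity} direction, I would establish (a) by induction on $|\alpha|$, taking $|\alpha|=1$ as the base, which is precisely condition (iii) of Definition \ref{gpbwextension}: setting $r_{e_i}:=c_{i,r}$ and $p_{e_i,r}:=x_ir-c_{i,r}x_i\in R$, the degree bound $\deg(p_{e_i,r})<1$ holds since $R$ consists of degree-zero elements. For the step, write $x^\alpha=x_i\cdot x^{\alpha-e_i}$ with $i$ the least index having $\alpha_i>0$, apply the inductive hypothesis to $x^{\alpha-e_i}r$, then reapply (iii) to the emerging leading scalar $r_{\alpha-e_i}$, and finally re-expand the leftover term $x_i\cdot p_{\alpha-e_i,r}$ using a secondary induction on the monomials appearing in $p_{\alpha-e_i,r}$, each of strictly smaller degree. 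Uniqueness of $r_\alpha$ and $p_{\alpha,r}$ is immediate from the basis property, and $r_\alpha\neq 0$ follows from iterating $c_{\cdot,\cdot}\neq 0$. To see that $r_\alpha$ is left invertible when $r$ is, I would first show $c_{i,r}$ is left invertible: starting from $r'r=1$, applying (iii) twice to $x_i=x_ir'r$ and comparing coefficients of $x_i$ in the unique basis expansion yields $c_{i,r'}c_{i,r}=1$; the general case is then another induction.

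For (b), I would induct on $|\alpha|+|\beta|$. The base case of single variables reduces to condition (iv) when the indices are out of order and to the trivial $x_ix_j=x^{e_i+e_j}$ (with $c_{e_i,e_j}=1$) when $i\le j$. In the inductive step, one factors $x^\alpha=x_i\cdot x^{\alpha-e_i}$ and brings scalars across $x^{\alpha-e_i}$ via part (a), then collects leading terms and absorbs all lower-degree contributions into $p_{\alpha,\beta}$. Left invertibility of $c_{\alpha,\beta}$ propagates since every new leading coefficient is built as a product of previously established left-invertible scalars (the base values $c_{i,j}$ being taken as left invertible).

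For the \emph{sufficiency} direction the argument is short: a left polynomial ring over $R$ on $\{x_1,\dots,x_n\}$ already gives conditions (i) and (ii) of Definition \ref{gpbwextension}. Specializing (a) to $\alpha=e_i$ produces $x_ir=c_{i,r}x_i+p_{e_i,r}$ where $\deg(p_{e_i,r})<1$ forces $p_{e_i,r}\in R$, which is exactly (\ref{sigmadefinicion1}). Specializing (b) to $\alpha=e_j$, $\beta=e_i$ gives $x_jx_i=c_{e_j,e_i}x^{e_i+e_j}+p_{e_j,e_i}$ with $\deg(p_{e_j,e_i})<2$, so $p_{e_j,e_i}\in R+Rx_1+\cdots+Rx_n$, yielding (\ref{sigmadefinicion2}).

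The main obstacle is the induction step of (a): after one application of (iii) the residual term $x_i\cdot p_{\alpha-e_i,r}$ must be brought back into the form $r_\alpha x^\alpha+(\text{lower})$, and controlling simultaneously the degree bound and the identification $\sigma^\alpha(r)=\sigma_i(\sigma^{\alpha-e_i}(r))$ requires a secondary induction on the individual monomials of $p_{\alpha-e_i,r}$. Beyond this bookkeeping, the propagation of left invertibility in (b) deserves attention, since Definition \ref{gpbwextension}(iv) does not itself impose invertibility on the $c_{i,j}$; one must verify that the $c_{\alpha,\beta}$ produced inductively remain left invertible from the way they factor through the base scalars.
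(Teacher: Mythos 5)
The paper itself does not prove Theorem \ref{coefficientes}; it only points to Proposition 2.4 of \cite{Gomez-Torrecillas2} (the detailed argument is Theorem 7 of \cite{Gallego2}). Your plan --- a simultaneous induction on total degree, using freeness of ${\rm Mon}(A)$ for uniqueness, peeling off $x_i$ with $i$ minimal, and the $x_i=x_ir'r$ trick for left invertibility of $\sigma_i(r)$ --- is exactly the standard route, and the sufficiency direction is handled correctly. The bookkeeping you flag (the residual term $x_i\,p_{\alpha-e_i,r}$, and the reordering $x_i x^{\gamma}$ inside the step for (b)) is real but is resolved by the simultaneous induction you describe together with a sub-induction on the number of inversions; that part of the plan is sound.

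There is, however, one genuine gap: the base case of the left-invertibility claim in (b). You write that the values $c_{i,j}$ are ``being taken as left invertible'' and correctly observe that Definition \ref{gpbwextension}(iv) does not impose this --- but then you only address the \emph{propagation} of left invertibility, not its starting point, so as written the necessity of the left-invertibility clause in (\ref{612}) is unproved. In fact it \emph{can} be derived, because condition (iv) is required for \emph{every} ordered pair $(i,j)$, including the reversed one. For $i<j$ one has $x_jx_i=c_{i,j}x_ix_j+p_{i,j}$ and also $x_ix_j=c_{j,i}x_jx_i+p_{j,i}$ with $p_{i,j},p_{j,i}\in R+Rx_1+\cdots+Rx_n$; substituting the first relation into the second gives $x_ix_j=c_{j,i}c_{i,j}\,x_ix_j+(c_{j,i}p_{i,j}+p_{j,i})$, and comparing the coefficient of the basis monomial $x_ix_j$ (degree $2$, while the remainder lies in $F_1$) yields $c_{j,i}c_{i,j}=1$. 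Hence $c_{e_j,e_i}=c_{i,j}$ is left invertible, and your propagation argument (products of left-invertible elements, and $\sigma^{\alpha}$ of left-invertible elements via the ``Moreover'' clause of (a)) then goes through. Without this observation the induction for (b) cannot even start, so you should add it explicitly.
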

In addition, the skew $PBW$ extensions are filtered rings and its associated graded ring satisfies an interesting property, as shown in the following statement.
\begin{proposition}\label{1.3.2}
Let $A$ be an arbitrary skew $PBW$ extension of $R$. Then, $A$ is a filtered ring with filtration
given by
\begin{equation}\label{eq1.3.1a}
F_m:=\begin{cases} R & {\rm if}\ \ m=0\\ \{f\in A\mid {\rm deg}(f)\le m\} & {\rm if}\ \ m\ge 1
\end{cases}
\end{equation}
and the corresponding graded ring $Gr(A)$ is a quasi-commutative skew $PBW$ extension of $R$.
Moreover, if $A$ is bijective, then $Gr(A)$ is a quasi-commutative bijective skew $PBW$ extension
of $R$.
\end{proposition}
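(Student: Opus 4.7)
The plan is to verify the two assertions separately: first that the prescribed family $\{F_m\}_{m\geq 0}$ gives a ring filtration of $A$, and second that the associated graded ring $Gr(A)=\bigoplus_{m\geq 0}F_m/F_{m-1}$ fits Definition \ref{sigmapbwderivationtype}(a) as a quasi-commutative skew $PBW$ extension of $R$, being bijective whenever $A$ is.

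For the filtration, the inclusions $F_m\subseteq F_{m+1}$ and the equality $\bigcup_{m\geq 0}F_m=A$ are immediate from (\ref{eq1.3.1a}) together with the fact that $\mathrm{Mon}(A)$ is a left $R$-basis of $A$. The only nontrivial point is the multiplicative condition $F_mF_k\subseteq F_{m+k}$, which I would deduce from Theorem \ref{coefficientes}: identity (\ref{611}) shows that moving a scalar past a monomial does not raise the total degree (since $\deg(p_{\alpha,r})<|\alpha|$), and identity (\ref{612}) gives the same for the product of two monomials (since $\deg(p_{\alpha,\beta})<|\alpha+\beta|$). Expanding arbitrary $f\in F_m$, $g\in F_k$ as finite $R$-combinations of the $x^\alpha$ and applying these two rules repeatedly then gives $fg\in F_{m+k}$.

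For the graded ring, note that $F_0/F_{-1}=R$ so $R\subseteq Gr(A)$ canonically. Let $\overline{x}_i\in F_1/F_0$ denote the class of $x_i$. Since $F_m$ is $R$-free on $\{x^\alpha\mid |\alpha|\le m\}$ and $F_{m-1}$ is $R$-free on $\{x^\alpha\mid |\alpha|\le m-1\}$, the quotient $F_m/F_{m-1}$ is $R$-free on the classes of $\{x^\alpha\mid|\alpha|=m\}$. Thus $\{\overline{x}^\alpha:=\overline{x}_1^{\alpha_1}\cdots\overline{x}_n^{\alpha_n}\mid\alpha\in\mathbb{N}^n\}$ is a left $R$-basis of $Gr(A)$, which establishes (i) and (ii) of Definition \ref{gpbwextension}. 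To obtain the quasi-commutative relations (iii') and (iv'), I use Theorem \ref{coefficientes} again: from $x_ir=c_{i,r}x_i+(x_ir-c_{i,r}x_i)$ with the parenthesized term lying in $R=F_0$, passing to $F_1/F_0$ yields $\overline{x}_ir=c_{i,r}\overline{x}_i$; from $x_jx_i=c_{i,j}x_ix_j+s$ with $s\in R+Rx_1+\cdots+Rx_n\subseteq F_1$, passing to $F_2/F_1$ yields $\overline{x}_j\overline{x}_i=c_{i,j}\overline{x}_i\overline{x}_j$. This gives the quasi-commutativity.

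For the bijective case, the crucial observation is that the scalars $c_{i,r}$ and $c_{i,j}$ governing the commutation rules in $Gr(A)$ are precisely those appearing in the defining relations of $A$, because the correction terms disappear upon passage to the associated graded. Hence if every $\sigma_i\colon r\mapsto c_{i,r}$ is bijective on $R$ and every $c_{i,j}$ with $i<j$ is invertible in $R$, the same holds verbatim for $Gr(A)$, giving bijectivity. I expect the only real bookkeeping burden to lie in the verification of $F_mF_k\subseteq F_{m+k}$, where one must iterate the rules of Theorem \ref{coefficientes} by induction on $|\alpha|+|\beta|$ to pass arbitrary scalars and monomials past each other while controlling the degree.
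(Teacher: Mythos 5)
Your proposal is correct: the paper itself only cites \cite{lezamareyes1} for this proposition, but the argument you give is essentially the one the paper reproduces in Section 3 (Proposition \ref{basegrad} and Theorem \ref{grcua}), namely showing that the classes of the standard monomials form an $R$-basis of $Gr(A)$ and then reading off the relations (iii') and (iv') from (\ref{sigmadefinicion1}) and (\ref{sigmadefinicion2}) after passage to the graded quotients, with bijectivity transferring because the constants $c_{i,r}$ and $c_{i,j}$ are unchanged. The only detail worth making explicit is the multiplicativity $\eta(x^{\alpha})\eta(x^{\beta})=\eta(x^{\alpha}x^{\beta})$ (the paper's Lemma \ref{lem2}), which is what identifies the class of $x^{\alpha}$ with the product $\overline{x}_1^{\alpha_1}\cdots\overline{x}_n^{\alpha_n}$.
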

\begin{proof}
See \cite{lezamareyes1}, Theorem 2.2.
\end{proof}
The above proposition enables us proving the Hilbert basis theorem for bijective skew $PBW$ extensions.
\begin{proposition}[Hilbert Basis Theorem]\label{1.3.4}
Let $A$ be a bijective skew $PBW$ extension of $R$. If $R$ is a left $($right$)$ Noetherian ring
then $A$ is also a left $($right$)$ Noetherian ring.
\end{proposition}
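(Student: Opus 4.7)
The plan is to reduce to the graded ring via Proposition \ref{1.3.2} and then apply the classical filtered-graded transfer principle, so the whole argument splits naturally into three steps.

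First I would invoke Proposition \ref{1.3.2}: the filtration $\{F_m\}$ makes $A$ into a filtered ring and $Gr(A)$ is a quasi-commutative bijective skew $PBW$ extension of $R$. The point of quasi-commutativity is that the defining relations simplify to $x_i r = c_{i,r}x_i$ and $x_jx_i = c_{i,j}x_ix_j$, with no lower-degree tail; bijectivity guarantees that the maps $\sigma_i\colon R\to R$ given by $\sigma_i(r)=c_{i,r}$ are automorphisms of $R$ and that the $c_{i,j}$ are units. From this I would identify $Gr(A)$ with an iterated skew polynomial (Ore) extension
\[
R[z_1;\sigma_1][z_2;\sigma_2']\cdots[z_n;\sigma_n'],
\]
where each $\sigma_k'$ is the automorphism on the previous layer induced by the commutation rules (using the units $c_{i,j}$ to make the extension of $\sigma_k$ to polynomials in the $z_\ell$ well-defined). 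This identification is where most of the technical bookkeeping lies.

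Second, I would apply the classical Hilbert basis theorem for Ore extensions: if $B$ is a left (resp. right) Noetherian ring and $\sigma$ is an automorphism, then $B[z;\sigma]$ is left (resp. right) Noetherian. Iterating this through the $n$ layers starting from $R$ yields that $Gr(A)$ is left (resp. right) Noetherian.

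Finally I would use the standard filtered-graded transfer: if $A$ is a filtered ring with exhaustive and separated filtration $\{F_m\}$ (here $F_{-1}=0$, $F_0=R$, $\bigcup_m F_m = A$) and $Gr(A)$ is left Noetherian, then $A$ is left Noetherian. Explicitly, given a left ideal $I\trianglelefteq A$, equip it with the induced filtration $I\cap F_m$ and consider the associated graded ideal $Gr(I)\trianglelefteq Gr(A)$; finite generation of $Gr(I)$ lifts to finite generation of $I$ by choosing preimages of a finite generating set and using induction on filtration degree. The right-handed version is symmetric. The main obstacle I expect is the first step: verifying carefully that the quasi-commutative bijective structure on $Gr(A)$ really does present it as an iterated Ore extension with bijective $\sigma$'s (so that Hilbert's theorem for Ore extensions applies in both left and right versions), since the $\sigma_i$ act on $R$ but must be extended compatibly across the intermediate polynomial layers.
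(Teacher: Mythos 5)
Your proposal is correct and follows essentially the same route as the paper, whose ``proof'' is simply a citation of \cite{lezamareyes1}, Corollary 2.4: there the argument is exactly your three steps --- $Gr(A)$ is a quasi-commutative bijective skew $PBW$ extension (Proposition \ref{1.3.2}), such an extension is an iterated Ore extension of automorphism type over $R$ and hence Noetherian by the classical Hilbert basis theorem for Ore extensions, and Noetherianity lifts from $Gr(A)$ to $A$ by the standard filtered-graded transfer for the positive, exhaustive filtration. The technical point you flag (presenting the quasi-commutative bijective extension as an iterated Ore extension with bijective maps) is precisely the content of Theorem 2.3 of that reference, so no gap remains.
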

\begin{proof}
See \cite{lezamareyes1}, Corollary 2.4.
\end{proof}
\begin{remark}
We developed the Gröbner bases theory for any bijective skew $PBW$ extension. \-Spe\-ci\-fi\-ca\-lly, we established a Buchberger's algorithm for these rings, the computation of syzygies module, as well as some applications as  membership problem, calculation of intersections, quotients, presentation of a module, computing free resolutions, the kernel and image of an homomorphism (see Chapter 5 and Chapter 6 in \cite{Gallego6}, or \cite{Gallego5}). In \cite{lezamapaiba} 
where presented some other applications of this noncommutative Gröbner theory. In all of these works the theory and the applications have been illustrated with many examples.
\end{remark}
\section{For left ideals}
\noindent
In \cite{lezamareyes1} was showed that  if $A$ is a skew $PBW$
extension, then its associated graded ring  $Gr(A)$ is a quasi-commutative skew $PBW$
extension (see Theorem 2.2 there in). In this section we will prove this fact using a different technique. Furthermore, we establish the transfer of Gröbner bases between $A$ and $Gr(A)$. \\
\\
By (\ref{1.3.2}), given $A$ a skew $PBW$ extension of the ring $R$, the collection of subsets $\{F_{p}(A)\}_{p\in \mathbb{Z}}$ of $A$ defined
by
 \[ F_{p}(A):=
    \begin{cases}
     0,  &  \text{if $p\leq -1$},\\
      R,  &  \text{if $p=0$},\\
      \{f\in A|\,\text{deg}(lm(f))\leq p\},  &  \text{if $p\geq 1$.}
      \end{cases} \]
is a filtration for the ring $A$, named \textit{standard filtration}.\\
Now, notice that
\[F_{p}(A)=\bigl\{\sum c_{\alpha}x^{\alpha}\mid c_{\alpha}\in R\setminus \{0\},\, x^{\alpha}\in Mon(A), \, \text{deg}(x^{\alpha})\leq p \bigr\};\]
in this case, we say that this filtration is the \textit{filtration $Mon(A)$-standard on $A$}. Moreover,
\[Mon(A)=\bigcup_{p\geq 0} Mon(A)_{p},\]
where $Mon(A)_{p}:=\{x^{\alpha}\in Mon(A)\mid \text{deg}(x^{\alpha})\leq p\}$, and if $|\alpha|=p$,
then $x^{\alpha}\notin Mon(A)_{p-1}$. In this case, it says that $Mon(A)$ is a \textit{strictly filtered basis}.\\
\\
It can be noted that any filtration $\{F_{p}(A)\}_{p\in \mathbb{Z}}$ on $A$ defines an order function $v:A\to \mathbb{Z}$ in the \-fo\-llo\-wing way:
\begin{center}
  $v(f):= \begin{cases}
      p,  &  \text{if }f\in F_{p}(A)-F_{p-1}(A),\\
      -\infty,  &  \text{if } f\in \cap_{p\in \mathbb{Z}}F_p(A).
      \end{cases}$
\end{center}
\begin{definition}
Let $Gr(A)$ be the graded ring associated to the filtered ring $A$, and let $f\in A$ with $f=\sum_{|\alpha|\leq p} c_{\alpha}x^{\alpha}$,
where $p=$deg$(f)$, $c_{\alpha}\in R\setminus\{0\}$ y $\alpha=(\alpha_{1},\ldots, \alpha_{n})\in \mathbb{N}^{n}$.
In what follows,  $\eta(f)$ will denote \textit{the image} (or \textit{principal symbol}) of $f$ in $Gr(A)$, i.e.,
\[\eta(f):=\sum_{|\alpha|=p} c_{\alpha}x^{\alpha}+F_{p-1}(A)\in F_{p}(A)/F_{p-1}(A).\]
\end{definition}
\begin{lemma}\label{lem1}
Let $A$, $Mon(A)$ and $\{F_{p}(A)\}_{p}$ as above, then:
\begin{enumerate}
\item[\rm (i)] For each $f\in A$, deg$(f)=v(f)$.
\item[\rm (ii)] For each $p\in \mathbb{N}$, $Mon(A)_{p}$ is a $R$-basis for $F_{p}(A)$.
\item[\rm (iii)] For $x^{\alpha}$, $x^{\beta}\in Mon(A)$, $\eta(x^{\alpha})=\eta(x^{\beta})$
if and only if $x^{\alpha}=x^{\beta}$.
\end{enumerate}
\end{lemma}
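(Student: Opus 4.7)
The plan is to verify each of the three items by unpacking the definitions and exploiting the fact, already guaranteed by Definition~\ref{gpbwextension}(ii), that $Mon(A)$ is an $R$-basis of $A$.

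For part~(i), I would simply chase definitions. From the description of $\{F_p(A)\}$ given just before the lemma, one has $f\in F_p(A)$ if and only if $\deg(f)\le p$ (the case $p=0$ is consistent because $R$ consists exactly of the elements of degree $0$ together with $0$). Consequently $f\in F_p(A)\setminus F_{p-1}(A)$ is equivalent to $\deg(f)=p$, so $v(f)=\deg(f)$. Since every element of $A$ has finite degree, the case $v(f)=-\infty$ does not occur for $f\ne0$.

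For part~(ii), I would first check spanning and then independence. Any $f\in F_p(A)$ has a unique expansion $f=\sum c_\alpha x^\alpha$ with $c_\alpha\in R$ because $Mon(A)$ is an $R$-basis of $A$. The key observation is that in this expansion distinct $x^\alpha$'s are $R$-linearly independent, so they cannot cancel; therefore $\deg(f)=\max\{|\alpha|:c_\alpha\ne0\}$. Combining with $\deg(f)\le p$ gives $|\alpha|\le p$ for every $x^\alpha$ appearing in $f$, so $f$ lies in the $R$-span of $Mon(A)_p$. Conversely any $R$-linear combination of elements of $Mon(A)_p$ has degree at most $p$ and so belongs to $F_p(A)$. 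Linear independence of $Mon(A)_p$ is inherited from that of $Mon(A)$. This is the step that needs the most care, because one must know that the degree is additive over the basis decomposition; that is precisely what prevents hidden cancellations.

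Part~(iii) then follows quickly from (ii). Given $x^\alpha,x^\beta\in Mon(A)$, set $p=|\alpha|$ and $q=|\beta|$. Assume first $p=q$; then $\eta(x^\alpha)-\eta(x^\beta)=(x^\alpha-x^\beta)+F_{p-1}(A)$, and this is zero in $F_p(A)/F_{p-1}(A)$ iff $x^\alpha-x^\beta\in F_{p-1}(A)$. By (ii) the $R$-module $F_{p-1}(A)$ has basis $Mon(A)_{p-1}$, so any element of it has an expansion in $Mon(A)$ involving only monomials of degree at most $p-1$; since $x^\alpha$ and $x^\beta$ both have degree $p$, the unique expansion of $x^\alpha-x^\beta$ in $Mon(A)$ can lie in $F_{p-1}(A)$ only when $\alpha=\beta$. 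If $p\ne q$, then $\eta(x^\alpha)$ and $\eta(x^\beta)$ live in different homogeneous components of $Gr(A)$, so they can only be equal when both are zero, contradicting that $x^\alpha\notin Mon(A)_{p-1}$ (and similarly for $\beta$). Hence $\eta(x^\alpha)=\eta(x^\beta)$ forces $\alpha=\beta$, and the converse is trivial.
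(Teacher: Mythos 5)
Your proposal is correct and follows essentially the same route as the paper: definition-chasing for (i), using that $Mon(A)$ is an $R$-basis of $A$ for (ii), and deducing (iii) from the fact that $x^{\alpha}-x^{\beta}\in F_{p-1}(A)$ forces $x^{\alpha}=x^{\beta}$ since neither monomial lies in $F_{p-1}(A)$. Your explicit treatment of the case $|\alpha|\neq|\beta|$ in (iii) is a small addition of care that the paper handles implicitly by placing both symbols in the same homogeneous component $Gr(A)_{p}$.
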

\begin{proof}
(i) From definition of $\{F_{p}(A)\}_{p\in \mathbb{Z}}$ it follows that if $0\neq f\in A$, then there
exists $p\in \mathbb{N}$ such that $f\in F_{p}(A)-F_{p-1}(A)$ and, therefore, $v(f)=p$. But,
if $f\in F_{p}(A)-F_{p-1}(A)$, then deg$(f)=p$ and we obtain the equality.\\
(ii) Let $f\in F_{p}(A)$, then $f=\sum_{|\alpha|\leq p} c_{\alpha}x^{\alpha}$, and hence, $f\in
\,_{R}\langle Mon(A)_{p}\}$. The linear independence of $Mon(A)_{p}$
it follows from fact that $Mon(A)_{p}\subseteq Mon(A)$ and $Mon(A)$ is linearly independent.\\
(iii) Let $x^{\alpha}$, $x^{\beta}\in Mon(A)$ such that $0\neq\eta(x^{\alpha})=\eta(x^{\beta}) \in
Gr(A)_{p}=F_{p}(A)/F_{p-1}(A)$; this last implies that $x^{\alpha}-x^{\beta}\in
F_{p-1}(A)$, i.e., $x^{\alpha}-x^{\beta}\in \,_{R}\langle Mon(A)_{p-1}\}$. Now, since
$x^{\alpha}$, $x^{\beta}\notin F_{p-1}(A)$, we have that $x^{\alpha}-x^{\beta}=0$,
namely $x^{\alpha}=x^{\beta}$. The other implication is straightforward.
\end{proof}
\begin{lemma}\label{lem2}
If $x^{\alpha}$, $x^{\beta}\in Mon(A)$, with deg$(x^{\alpha})=p$ and deg$(x^{\beta})=q$, then
$\eta(x^{\alpha}x^{\beta})=\eta(x^{\alpha})\eta(x^{\beta})$. In particular, if
$x^{\alpha}=x_{1}^{\alpha_{1}}\cdots x_{n}^{\alpha_{n}}\in F_p(A)-F_{p-1}(A)$, necessarily
$\eta(x^{\alpha})\neq 0$ and $\eta(x^{\alpha})=\eta(x_{1})^{\alpha_{1}}\cdots \eta(x_{n})^{\alpha_{n}}\linebreak\in Gr(A)_{p}$.
\end{lemma}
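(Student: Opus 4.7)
The strategy is to invoke Theorem~\ref{coefficientes}(b). Applied to $x^{\alpha}$ and $x^{\beta}$, it produces the decomposition
\[
x^{\alpha}x^{\beta} = c_{\alpha,\beta}\,x^{\alpha+\beta} + p_{\alpha,\beta},
\]
with $c_{\alpha,\beta}\in R$ left invertible (hence nonzero) and $p_{\alpha,\beta}$ either zero or of degree strictly less than $|\alpha+\beta|=p+q$. This yields two facts simultaneously: first, $\deg(x^{\alpha}x^{\beta})=p+q$, so $\eta(x^{\alpha}x^{\beta})$ lives in $Gr(A)_{p+q}$; second, $p_{\alpha,\beta}\in F_{p+q-1}(A)$, so it vanishes modulo $F_{p+q-1}(A)$.

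Next I would unpack the definition of $\eta$ and the definition of multiplication in $Gr(A)$ to check that both $\eta(x^{\alpha}x^{\beta})$ and the graded product $\eta(x^{\alpha})\eta(x^{\beta})$ coincide with the class $x^{\alpha}x^{\beta}+F_{p+q-1}(A)$ in $F_{p+q}(A)/F_{p+q-1}(A)$. Indeed, the definition of the product in the associated graded ring gives
\[
\eta(x^{\alpha})\eta(x^{\beta})=\bigl(x^{\alpha}+F_{p-1}(A)\bigr)\bigl(x^{\beta}+F_{q-1}(A)\bigr)=x^{\alpha}x^{\beta}+F_{p+q-1}(A),
\]
and the displayed decomposition above shows that this same class equals $\eta(x^{\alpha}x^{\beta})$. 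This settles the first assertion.

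For the ``In particular'' clause, the nonvanishing $\eta(x^{\alpha})\neq 0$ is immediate from the hypothesis $x^{\alpha}\in F_p(A)\setminus F_{p-1}(A)$, since $\eta(x^{\alpha})=0$ would place $x^{\alpha}$ in $F_{p-1}(A)$. The factorization $\eta(x^{\alpha})=\eta(x_1)^{\alpha_1}\cdots\eta(x_n)^{\alpha_n}$ then follows by a routine induction on $|\alpha|$, each step peeling off one factor $x_i$ from the monomial $x_1^{\alpha_1}\cdots x_n^{\alpha_n}$ and applying the multiplicativity identity just established (with $x^{\beta}$ taken as that single variable of degree one).

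I do not anticipate a serious obstacle. The one point demanding care is to justify that $c_{\alpha,\beta}\neq 0$, which is what ensures $\deg(c_{\alpha,\beta}\,x^{\alpha+\beta})=p+q$ rather than something smaller, and hence that the class $x^{\alpha}x^{\beta}+F_{p+q-1}(A)$ is really nonzero in $Gr(A)_{p+q}$; this is precisely where the left-invertibility clause of Theorem~\ref{coefficientes}(b) enters the argument.
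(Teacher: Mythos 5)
Your proposal is correct and follows essentially the same route as the paper's proof: both rest on the decomposition $x^{\alpha}x^{\beta}=c_{\alpha,\beta}x^{\alpha+\beta}+p_{\alpha,\beta}$ from Theorem~\ref{coefficientes}(b), the observation that $p_{\alpha,\beta}\in F_{p+q-1}(A)$ so the two classes coincide in $F_{p+q}(A)/F_{p+q-1}(A)$, and the left-invertibility of $c_{\alpha,\beta}$ to guarantee nonvanishing. The only difference is that you spell out the induction for the factorization $\eta(x^{\alpha})=\eta(x_1)^{\alpha_1}\cdots\eta(x_n)^{\alpha_n}$, which the paper leaves implicit.
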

\begin{proof}
In fact, $x^{\alpha}x^{\beta}=c_{\alpha, \beta}x^{\alpha+\beta}+p_{\alpha, \beta}$, where
$c_{\alpha, \beta}\in R$ is left invertible and $p_{\alpha, \beta}=0$ or deg$(p_{\alpha,
\beta})<|\alpha+\beta|=p+q$ (see Theorem \ref{coefficientes}), whence $0\neq
\eta(x^{\alpha}x^{\beta})=\overline{c_{\alpha,\beta}x^{\alpha+\beta}}=
c_{\alpha,\beta}\overline{x^{\alpha+\beta}}\in F_{p+q}(A)/F_{p+q-1}(A)$. Furthermore, $0\neq
\eta(x^{\alpha})\eta(x^{\beta})=\overline{x^{\alpha}}\overline{x^{\beta}}=
\overline{x^{\alpha}x^{\beta}}\in F_{p+q}(A)/F_{p+q-1}(A)$; but $x^{\alpha}x^{\beta}-c_{\alpha,
\beta}x^{\alpha+\beta}=p_{\alpha, \beta}\in F_{p+q-1}(A)$, then
$\overline{x^{\alpha}x^{\beta}}=\overline{c_{\alpha,\beta}x^{\alpha+\beta}}$, i.e.,
$\eta(x^{\alpha}x^{\beta})=\eta(x^{\alpha})\eta(x^{\beta})$.
\end{proof}
\begin{proposition}\label{basegrad}
Let $A$, $Mon(A)$ and $\{F_{p}(A)\}$ as before, then  $\eta(Mon(A)_{p}):=\{\eta(x^{\alpha})\mid x^{\alpha}\in Mon(A)_{p}\}$,
forms a $R$-basis of $Gr(A)_{p}$ for each $p\in \mathbb{N}$. Moreover,
$\eta(Mon(A)):=\{\eta(x^{\alpha})\mid x^{\alpha}\in Mon(A)\}$ is a $R$-basis for $Gr(A)$.
\end{proposition}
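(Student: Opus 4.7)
The plan is to transfer the free left $R$-basis $Mon(A)_p$ of $F_p(A)$ provided by Lemma \ref{lem1}(ii) through the projection $F_p(A)\to F_p(A)/F_{p-1}(A)=Gr(A)_p$, noting that only monomials of top degree survive. For spanning, given a class $[f]\in Gr(A)_p$ represented by $f\in F_p(A)$, I would use Lemma \ref{lem1}(ii) to expand $f=\sum_{|\alpha|\le p}c_\alpha x^\alpha$ with $c_\alpha\in R$. Since each term with $|\alpha|<p$ lies in $F_{p-1}(A)$, all such terms vanish in the quotient, leaving $[f]=\sum_{|\alpha|=p}c_\alpha\,\eta(x^\alpha)$. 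This exhibits $\{\eta(x^\alpha):|\alpha|=p\}$ as an $R$-generating set of $Gr(A)_p$.

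For linear independence, I would begin with a relation $\sum_{|\alpha|=p}c_\alpha\,\eta(x^\alpha)=0$ in $Gr(A)_p$ and rewrite it as $g:=\sum_{|\alpha|=p}c_\alpha x^\alpha\in F_{p-1}(A)$. Applying Lemma \ref{lem1}(ii) at level $p-1$ yields another expansion $g=\sum_{|\beta|\le p-1}d_\beta x^\beta$. Subtracting and using the $R$-linear independence of the full basis $Mon(A)$ guaranteed by Definition \ref{gpbwextension}(ii) then forces every $c_\alpha$ (and every $d_\beta$) to be zero.

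The global statement will follow by assembling the homogeneous pieces. Since $Gr(A)=\bigoplus_{p\ge 0}Gr(A)_p$ as a left $R$-module, the union of the bases $\{\eta(x^\alpha):|\alpha|=p\}$ over all $p$ is an $R$-basis of $Gr(A)$; Lemma \ref{lem1}(iii) ensures that distinct elements of $Mon(A)$ yield distinct principal symbols, so this union is precisely $\eta(Mon(A))$ with no repetitions. No step presents a serious obstacle, as the filtration structure from Lemma \ref{lem1} does all the work; the only point requiring mild care is the notational convention that the homogeneous piece $Gr(A)_p$ collects exactly the symbols of monomials of top degree $p$, while lower-degree monomials in $Mon(A)_p$ contribute to earlier graded components $Gr(A)_q$ for $q<p$.
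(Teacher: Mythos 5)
Your proposal is correct and follows essentially the same route as the paper: expand a representative $f\in F_p(A)$ in the basis $Mon(A)_p$, observe that only the top-degree terms survive in the quotient $F_p(A)/F_{p-1}(A)$ to get spanning, and derive linear independence from the fact that a nontrivial $R$-combination of degree-$p$ monomials cannot lie in $F_{p-1}(A)$ because $Mon(A)$ is a strictly filtered $R$-basis. Your closing remark about the notational mismatch (lower-degree monomials of $Mon(A)_p$ landing in $Gr(A)_q$ for $q<p$ rather than in $Gr(A)_p$) is a fair observation that the paper's proof also glosses over.
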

\begin{proof}
Let $f\in F_{p}(A)\setminus F_{p-1}(A)$, then $f=\sum_{|\alpha|\leq
p}c_{\alpha}x^{\alpha}$ with $c_{\alpha}\in R\setminus \{0\}$ y $\eta(f)=
\sum_{|\alpha|=p}c_{\alpha}\eta(x^{\alpha})\neq 0$. By Lemma \ref{lem2}, $\eta(x^{\alpha})\in
Gr(A)_{p}$ for every $\alpha$ with $|\alpha|=p$, thus $\eta(Mon(A)_{p})$ is a generating set
for the left $R$-module $Gr(A)_{p}$. Now, suppose that there are $\lambda_{i}\in R$ such that
$0=\sum \lambda_{i}\eta(x^{\alpha_{i}})\in Gr(A)_{p}$ for certain $x^{\alpha_{i}}\in Mon(A)_{p}$,
then $\sum \lambda_{i}x^{\alpha_{i}}\in F_{p-1}(A)$; but  deg$(x^{\alpha_{i}})=p$ for  each $i$
and $Mon(A)$ is a  $R$-basis filtered strictly, hence $\lambda_{i}=0$ for every $i$.
\end{proof}
The above preliminaries enables us to establish one of the main theorems of this section.
\begin{theorem}\label{grcua}
If $A=\sigma(R)\langle x_{1},\ldots, x_{n}\rangle$ is a (bijective) skew $PBW$ extension of ring $R$, then $Gr(A)$
is a (bijective) quasi-commutative skew $PBW$ extension of $R$.
\end{theorem}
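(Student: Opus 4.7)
The plan is to verify directly the four axioms of Definition~\ref{gpbwextension} together with condition (a) of Definition~\ref{sigmapbwderivationtype} for the ring $Gr(A)$, using the generators $\eta(x_1),\dots,\eta(x_n)$, and the preliminary Lemmas~\ref{lem1}, \ref{lem2} and Proposition~\ref{basegrad} already in hand. The bijective case will then follow essentially for free, because the structure constants in $Gr(A)$ will turn out to be the same as those in $A$.

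First I would handle condition (i): since $F_{-1}(A)=0$ and $F_0(A)=R$, the degree-zero piece $Gr(A)_0 = F_0(A)/F_{-1}(A)$ is naturally identified with $R$, so $R\subseteq Gr(A)$. Condition (ii) is exactly Proposition~\ref{basegrad} combined with Lemma~\ref{lem2}: the latter shows that $\eta(x^{\alpha})=\eta(x_1)^{\alpha_1}\cdots\eta(x_n)^{\alpha_n}$, and the former shows that these elements form a left $R$-basis of $Gr(A)$, so
\[
{\rm Mon}(Gr(A))=\{\eta(x_1)^{\alpha_1}\cdots\eta(x_n)^{\alpha_n}\mid \alpha\in\mathbb{N}^n\}
\]
is the required $R$-basis.

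The heart of the proof is verifying the stronger quasi-commutative relations (iii') and (iv'), which amounts to observing that the lower-order remainders in $A$ drop out upon passing to $Gr(A)$. For (iii'), given $1\le i\le n$ and $0\neq r\in R$, the skew $PBW$ axiom gives $x_i r=c_{i,r}x_i+s$ with $s\in R=F_0(A)$; since $c_{i,r}\neq 0$, the element $x_i r$ lies in $F_1(A)\setminus F_0(A)$, so taking principal symbols and using that the graded product $\eta(x_i)\cdot r$ is computed as $x_i r+F_0(A)$ (with $r$ identified with its class in $F_0/F_{-1}$),
\[
\eta(x_i)\,r=x_ir+F_0(A)=c_{i,r}x_i+F_0(A)=c_{i,r}\,\eta(x_i).
\]
For (iv') the argument is parallel: $x_jx_i-c_{i,j}x_ix_j\in R+Rx_1+\cdots+Rx_n=F_1(A)$, while $x_jx_i,x_ix_j\in F_2(A)\setminus F_1(A)$ (by Theorem~\ref{coefficientes} the top term $c_{j,i}x_{i+j}$ of both products is nonzero), so their principal symbols agree up to the factor $c_{i,j}$ and Lemma~\ref{lem2} converts this to $\eta(x_j)\eta(x_i)=c_{i,j}\eta(x_i)\eta(x_j)$. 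The same scalars $c_{i,r}$ and $c_{i,j}$ that govern $A$ therefore govern $Gr(A)$, which shows the equality is clean (no residual term in $R+Rx_1+\cdots+Rx_n$), i.e.\ $Gr(A)$ is quasi-commutative.

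Finally, in the bijective case, the map $\sigma_i$ attached to $Gr(A)$ sends $r\mapsto c_{i,r}$, exactly as in $A$, and the constants $c_{i,j}$ coincide with those of $A$; hence bijectivity of each $\sigma_i$ and invertibility of each $c_{i,j}$ transfer immediately from $A$ to $Gr(A)$. The main obstacle I foresee is not conceptual but careful: one must be sure that the products $x_ir$ and $x_jx_i$ genuinely attain the top filtration degrees $1$ and $2$, so that the remainders really sit in the next lower filtered piece and vanish in the graded quotient; once this is checked via Theorem~\ref{coefficientes} (left-invertibility of $c_{i,r}$ and of the leading coefficient $c_{\alpha,\beta}$), everything else reduces to bookkeeping with $\eta$.
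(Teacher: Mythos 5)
Your proposal is correct and follows essentially the same route as the paper: take $\eta(x_1),\dots,\eta(x_n)$ as generators, invoke Proposition~\ref{basegrad} for the $R$-basis, pass the defining relations (iii) and (iv) to principal symbols so the lower-degree remainders die in the graded quotient, and use Lemma~\ref{lem2} to identify $\eta(x_jx_i)$ with $\eta(x_j)\eta(x_i)$, with bijectivity transferring because the constants $c_{i,r}$ and $c_{i,j}$ are unchanged. The only cosmetic difference is that you justify $x_ir\notin F_0(A)$ via $c_{i,r}\neq 0$ and the strictness of the filtered basis, where the paper instead cites that $\mathrm{Mon}(A)$ is a basis of $A$ as a right $R$-module; both settle the same point.
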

\begin{proof}
We must show that in $Gr(A)$ there exist nonzero elements $y_{1},\ldots, y_{n}$ satisfying the conditions in
(a) from Definition \ref{sigmapbwderivationtype}. Define $y_{i}:=\eta(x_{i})$ for each $1\leq i\leq n$; by
Proposition \ref{basegrad} we have that
\[\eta(Mon(A)):=\{\eta(x^{\alpha})=\eta(x_{1})^{\alpha_{1}}\cdots \eta(x_{n})^{\alpha_{n}}\mid x^{\alpha}\in Mon(A)\}\]
is a $R$-basis for $Gr(A)$. Now, given $r\in R\setminus \{0\}$, there is $c_{i,r}\in R\setminus
\{0\}$ such that $x_{i}r-c_{i,r}x_{i}=p_{i,r}\in R$; from last equality it follows that $\eta(x_{i}r)-\eta(c_{i,r}x_{i})=\eta(p_{i,r})=0$, i.e.,
$\eta(x_{i}r)=\eta(c_{i,r}x_{i})=c_{i,r}\eta(x_{i})$; but $x_ir\neq 0$ for any nonzero $r\in R$ because $Mon(A)$ is
a $R$-basis for the right $R$-module $A_R$ (see \cite{lezamareyes1}, Proposition 1.7), thus
$\eta(x_{i}r)=\eta(x_{i})\eta(r)=\eta(x_{i})r$, and consequently $\eta(x_{i})r=c_{i,r}\eta(x_{i})$.
On the other hand, given $i,j\in \{1,\ldots n\}$, there exists $c_{i,j}\in R\setminus\{0\}$ such that
$x_{j}x_{i}-c_{i,j}x_{i}x_{j}=p_{i,j}\in R+Rx_{1}+\cdots +Rx_{n}$; hence we have that
$\eta(x_{j}x_{i})=\eta(c_{i,j}x_{i}x_{j})=c_{i,j}\eta(x_{i})\eta(x_{j})$, and by Lemma \ref{lem2}
$\eta(x_{j}x_{i})=\eta(x_{j})\eta(x_{i})$,  therefore
$\eta(x_{j})\eta(x_{i})=c_{i,j}\eta(x_{i})\eta(x_{j})$. Since the $c_{i,r}$'s and $c_{i,j}$'s that define
to $Gr(A)$ as a quasi-commutative skew $PBW$ extension are the same that define $A$ as a skew $PBW$ extension
of $R$, then the bijectivity of $A$ implies the of $Gr(A)$.
\end{proof}
\begin{remark}
The last theorem will allow us to establish a back and forth between Gröbner bases theory for $A$ and
$Gr(A)$. As we will show, the existence of one theory implies the existence of the other.
\end{remark}
\noindent
In the following, the set $\eta(Mon(A))$ will be denoted by $Mon(Gr(A))$. Thus, $Mon(Gr(A))$ is the basis for the left $R$-module $Gr(A)$   composed by the standard monomials in the variables $\eta(x_{1}),\ldots, \eta(x_{n})$.\\
\\
We recall the definition of monomial order on a ring.
\begin{definition}\label{monomialorder}
Let $\succeq$ be a total order on $Mon(A)$, it says that $\succeq$
is a monomial order on $Mon(A)$ if the following conditions hold:
\begin{enumerate}
\item[\rm (i)]For every $x^{\beta},x^{\alpha},x^{\gamma},x^{\lambda}\in Mon(A)$
\begin{center}
$x^{\beta}\succeq x^{\alpha}$ $\Rightarrow$
$lm(x^{\gamma}x^{\beta}x^{\lambda})\succeq
lm(x^{\gamma}x^{\alpha}x^{\lambda})$.
\end{center}

\item[\rm (ii)]$x^{\alpha}\succeq 1$, for every $x^{\alpha}\in
Mon(A)$.
\item[\rm (iii)]$\succeq$ is degree compatible, i.e., $|\beta|\geq |\alpha|\Rightarrow x^{\beta}\succeq
x^{\alpha}$.
\end{enumerate}
\end{definition}
Monomial orders are also called \textit{admissible orders}.
\begin{proposition}\label{mongrad}
If $\succeq$ is a monomial order on $Mon(A)$, then relation $\succeq_{gr}$ defined over
$Mon(Gr(A))$ by
\begin{align}\label{ordgrad}
\eta(x^{\alpha})\succeq_{gr} \eta(x^{\beta})\Leftrightarrow x^{\alpha} \succeq x^{\beta}
\end{align}
is a monomial order for $Mon(Gr(A))$.
\end{proposition}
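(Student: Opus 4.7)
The plan is to verify the four requirements of Definition \ref{monomialorder} one at a time, transferring each condition from $Mon(A)$ back to $Mon(Gr(A))$ via the bijection $\eta$. The essential point is that by Lemma \ref{lem1}(iii) the map $\eta:Mon(A)\to Mon(Gr(A))$ is a bijection, so $\succeq_{gr}$ is well defined and is automatically a total order inheriting totality from $\succeq$.

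Conditions (ii) and (iii) are almost immediate. For (ii), $\eta(x^{\alpha})\succeq_{gr}1$ translates directly to $x^{\alpha}\succeq 1$ (noting $\eta(1)=1$), which holds for $\succeq$. For (iii), degree compatibility, one observes that by Lemma \ref{lem2} the graded piece containing $\eta(x^{\alpha})$ is $Gr(A)_{|\alpha|}$, so the total degree of $\eta(x^{\alpha})$ as a monomial in $Gr(A)$ equals $|\alpha|$; hence $|\beta|\geq|\alpha|$ implies $x^{\beta}\succeq x^{\alpha}$ by (iii) for $\succeq$, which in turn gives $\eta(x^{\beta})\succeq_{gr}\eta(x^{\alpha})$.

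The main work lies in condition (i), and this is where Theorem \ref{grcua} is crucial. I would first observe that, since $Gr(A)$ is a quasi-commutative skew $PBW$ extension (Theorem \ref{grcua}), for any $x^{\mu},x^{\nu}\in Mon(A)$ one has $\eta(x^{\mu})\eta(x^{\nu})=c_{\mu,\nu}\,\eta(x^{\mu+\nu})$ with $c_{\mu,\nu}\in R$ invertible, so iterating gives
\[
\eta(x^{\gamma})\eta(x^{\beta})\eta(x^{\lambda})=c\,\eta(x^{\gamma+\beta+\lambda})
\]
for some invertible $c\in R$, and therefore
\[
lm\bigl(\eta(x^{\gamma})\eta(x^{\beta})\eta(x^{\lambda})\bigr)=\eta(x^{\gamma+\beta+\lambda}).
\]
In parallel, in $A$ itself, Theorem \ref{coefficientes} implies that $x^{\gamma}x^{\beta}x^{\lambda}$ equals a scalar multiple of $x^{\gamma+\beta+\lambda}$ plus terms of strictly lower total degree, so by degree compatibility of $\succeq$ we get $lm(x^{\gamma}x^{\beta}x^{\lambda})=x^{\gamma+\beta+\lambda}$, and similarly with $\alpha$ in place of $\beta$.

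With these two identifications, condition (i) for $\succeq_{gr}$ reduces to: if $x^{\beta}\succeq x^{\alpha}$, then $x^{\gamma+\beta+\lambda}\succeq x^{\gamma+\alpha+\lambda}$, which is exactly condition (i) applied to $\succeq$ on $Mon(A)$ (since the leading monomial on each side is literally $x^{\gamma+\beta+\lambda}$ and $x^{\gamma+\alpha+\lambda}$). Hence $\succeq_{gr}$ satisfies (i) as well. The only subtle point I expect to watch carefully is the identification of $lm$ of a triple product in both rings; once degree compatibility of $\succeq$ and quasi-commutativity of $Gr(A)$ are invoked, this becomes routine and the proof closes.
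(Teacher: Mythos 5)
Your proof is correct and takes essentially the same route as the paper's: the crux in both arguments is the identification $lm\bigl(\eta(x^{\gamma})\eta(x^{\beta})\eta(x^{\lambda})\bigr)=\eta(x^{\gamma+\beta+\lambda})=\eta\bigl(lm(x^{\gamma}x^{\beta}x^{\lambda})\bigr)$, which the paper derives from the product formula of Theorem \ref{coefficientes} and you derive equivalently from the quasi-commutativity of $Gr(A)$ given by Theorem \ref{grcua}. Your explicit observation that $\eta$ is a bijection on monomials (Lemma \ref{lem1}(iii)), so that $\succeq_{gr}$ is well defined and total, is a point the paper leaves implicit but is the same argument in substance.
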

\begin{proof}
We will show that $\succeq_{gr}$  satisfies the conditions in the Definition
\ref{monomialorder}: (i) Let $\eta(x^{\alpha})$, $\eta(x^{\beta})$, $\eta(x^{\lambda})$, $\eta(x^{\gamma})\in Mon(Gr(A))$
and suppose that $\eta(x^{\beta})\succeq_{gr} \eta(x^{\alpha})$,
then,
\begin{center}
$lm(\eta(x^{\gamma})\eta(x^{\beta})\eta(x^{\lambda}))\succeq_{gr}lm(\eta(x^{\gamma})\eta(x^{\alpha})\eta(x^{\lambda}))
\Leftrightarrow
lm(\eta(x^{\gamma}x^{\beta}x^{\lambda}))\succeq_{gr} lm(\eta(x^{\gamma}x^{\alpha}x^{\lambda}))$.
\end{center}
But, $\eta(lm(x^{\gamma}x^{\beta}x^{\lambda}))=lm(\eta(x^{\gamma}x^{\beta}x^{\lambda}))$ for all
$\gamma$, $\beta$, $\lambda\in \mathbb{N}^{n}$: indeed,
$\eta(x^{\gamma}x^{\beta}x^{\lambda})=c\overline{x^{\gamma+\beta+\lambda}}\linebreak =
c\eta(x^{\gamma+\beta+\lambda})$, where $c:=c_{\gamma,\beta}c_{\gamma+\beta,\lambda}$ (see
Remark 8 in \cite{Gallego2}). Therefore,
\begin{center}
$lm(\eta(x^{\gamma}x^{\beta}x^{\lambda}))=lm(c\eta(x^{\gamma+\beta+\lambda}))=\eta(x^{\gamma+\beta+\lambda})
=\eta(lm(x^{\gamma}x^{\beta}x^{\lambda}))$.
\end{center}
Since $\succeq$ is a order monomial on $Mon(A)$, it has $lm(x^{\gamma} x^{\beta}
x^{\lambda}) \succeq lm(x^{\gamma} x^{\alpha} x^{\lambda})$, so that $\eta(lm(x^{\gamma} x^{\beta}
x^{\lambda})) \succeq_{gr} \eta(lm(x^{\gamma} x^{\alpha} x^{\lambda}))$, i.e.,  $lm(\eta
(x^{\gamma} x^{\beta} x^{\lambda}))\succeq_{gr} lm(\eta (x^{\gamma} x^{\alpha} x^{\lambda}))$. In consequence, $lm(\eta(x^{\gamma}) \eta(x^{\beta}) \eta(x^{\lambda})) \succeq_{gr}
lm(\eta(x^{\gamma}) \eta(x^{\alpha})\eta(x^{\lambda}))$.\\
The conditions (ii) y (iii) in Definition \ref{monomialorder} are easily verifiable.
\end{proof}
\begin{lemma}\label{lem3}
Let $A$ as before, $\succeq$ a monomial order on $Mon(A)$ and $f\in A$ an arbitrary element.
Then,
\begin{enumerate}
\item[\rm (i)] $f\in F_{p}(A)$ if and only if deg$(f)\leq p$. Further, $f\in F_{p}(A)- F_{p-1}(A)$
if, and only, if deg$(f)=p$.
\item[\rm (ii)] $\eta(lm(f))=lm(\eta(f))$.
\end{enumerate}
\end{lemma}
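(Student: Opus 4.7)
Part (i) is essentially a restatement of the filtration defined in Proposition~\ref{1.3.2} combined with Lemma~\ref{lem1}(i). The plan is simply to unpack the definition: for $p\ge 1$, $F_p(A)$ is by construction $\{f\in A\mid\deg(f)\le p\}$, so the equivalence is tautological; for $p=0$ one checks that $f\in R$ is the same as $\deg(f)=0$ (or $f=0$). The second assertion then follows because $F_p(A)-F_{p-1}(A)$ is, by the first equivalence, exactly $\{f\mid \deg(f)\le p\}\setminus\{f\mid \deg(f)\le p-1\}$, i.e.\ the elements of degree exactly $p$. This step is routine.

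The substantive step is (ii). The plan is to write $f=\sum_{|\alpha|\le p} c_\alpha x^\alpha$ with $c_\alpha\in R\setminus\{0\}$ and $p=\deg(f)$, and then to exploit the degree compatibility of~$\succeq$ (condition (iii) of Definition~\ref{monomialorder}) to pin down $lm(f)$. Specifically, since $|\beta|>|\alpha|$ implies $x^\beta\succ x^\alpha$, the leading monomial $lm(f)=x^{\alpha_0}$ must satisfy $|\alpha_0|=p$; otherwise some monomial of strictly larger degree would appear in $f$ and beat $x^{\alpha_0}$ under $\succeq$. Hence $\eta(lm(f))=\eta(x^{\alpha_0})$.

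On the other hand, by Proposition~\ref{basegrad} and the definition of $\eta$, one has
\[
\eta(f)=\sum_{|\alpha|=p}c_\alpha\,\eta(x^\alpha)\in Gr(A)_p,
\]
so $lm(\eta(f))$ is the $\succeq_{gr}$-largest element of $\{\eta(x^\alpha)\mid |\alpha|=p,\ c_\alpha\neq 0\}$. By the very definition (\ref{ordgrad}) of $\succeq_{gr}$, this is $\eta(x^{\alpha^*})$ where $x^{\alpha^*}$ is $\succeq$-maximal in $\{x^\alpha\mid |\alpha|=p,\ c_\alpha\neq 0\}$. But $x^{\alpha_0}$ is $\succeq$-maximal among \emph{all} monomials appearing in $f$, so in particular among those of top degree $p$; thus $\alpha^*=\alpha_0$ and $lm(\eta(f))=\eta(x^{\alpha_0})=\eta(lm(f))$.

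The only place where something could go wrong is the identification of the top-degree part of $lm(f)$: if $\succeq$ were not degree compatible, $lm(f)$ might live strictly below degree $p$ and its image under $\eta$ would vanish in $Gr(A)_p$, breaking the equality. So the main (and really only) non-cosmetic point is to invoke (iii) of Definition~\ref{monomialorder} to force $|lm(f)|=\deg(f)$; everything else is bookkeeping using Proposition~\ref{basegrad} and the definition of $\succeq_{gr}$.
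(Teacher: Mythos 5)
Your proof is correct and takes essentially the same route as the paper's: part (i) by unpacking the definition of $F_p(A)$ together with Lemma~\ref{lem1}, and part (ii) by expanding $f$, noting that $\eta(f)$ retains only the top-degree terms, and matching leading monomials via the definition of $\succeq_{gr}$. If anything, you make explicit the role of degree compatibility in forcing $\deg(lm(f))=\deg(f)$, a point the paper's proof uses silently.
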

\begin{proof}
(i) It follows from the definition of $F_{p}(A)$ and Lemma \ref{lem1}. \\
(ii) Let $f$ be a nonzero polynomial in $A$; there exists $p\in \mathbb{N}$ such that
$f\in F_{p}(A)-F_{p-1}(A)$. Let $f=\sum_{i=1}^{n} \lambda_{i}x^{\alpha_{i}}$, with $\lambda_{i}\in R\setminus
\{0\}$ y $x^{\alpha_{i}}\in Mon(A)_{p}$, $1\leq i\leq n$, where
$x^{\alpha_{1}}\succ x^{\alpha_{2}}\succ\cdots \succ x^{\alpha_{n}}$. Hence,
$lm(f)=x^{\alpha_{1}}$, deg$(f)=p$ and
$\eta(f)=\sum_{|\alpha_{i}|=p}\lambda_{i}\eta(x^{\alpha_{i}})$. From the definition
given for $\succeq_{gr}$, we have that
$lm(\eta(f))=\eta(x^{\alpha_{1}})=\eta(lm(f))$.
\end{proof}
We will prove that the reciprocal of the Proposition \ref{mongrad} also holds.
\begin{proposition}\label{gradorder}
Let $A$ and $Gr(A)$ as before. If  $\succeq_{gr}$ is a monomial order
on $Mon(Gr(A))$, then the relation  $\succeq$ defined as
\begin{align}\label{ordmon}
x^{\alpha}\succ x^{\beta}\Leftrightarrow \eta(x^{\alpha})\succ_{gr} \eta(x^{\beta})
\end{align}
is a monomial order over  $Mon(A)$.
\end{proposition}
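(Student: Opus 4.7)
The plan is to verify, one by one, that the relation $\succeq$ pulled back from $\succeq_{gr}$ via $\eta$ satisfies the three conditions of Definition \ref{monomialorder}. The starting observation is that $\eta$ induces a bijection $Mon(A)\to Mon(Gr(A))$ (by Lemma \ref{lem1}(iii) together with Proposition \ref{basegrad}), so the pullback of the total order $\succeq_{gr}$ is automatically a total order on $Mon(A)$.

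I would dispose of the easy axioms first. For condition (iii), note that by Lemma \ref{lem2} the monomial $\eta(x^{\alpha})$ is a nonzero homogeneous element of $Gr(A)_{|\alpha|}$, and similarly $\eta(x^{\beta})\in Gr(A)_{|\beta|}$. Hence if $|\beta|\geq|\alpha|$, degree compatibility of $\succeq_{gr}$ yields $\eta(x^{\beta})\succeq_{gr}\eta(x^{\alpha})$, i.e.\ $x^{\beta}\succeq x^{\alpha}$. For condition (ii), just observe $\eta(1)=1\in Gr(A)$, and since $\succeq_{gr}$ is a monomial order, $\eta(x^{\alpha})\succeq_{gr}1=\eta(1)$, whence $x^{\alpha}\succeq 1$.

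The heart of the argument is the multiplicative compatibility (i). Suppose $x^{\beta}\succeq x^{\alpha}$, i.e.\ $\eta(x^{\beta})\succeq_{gr}\eta(x^{\alpha})$. The key computational step is to identify the leading monomials of the triple products on both sides. By iterating Lemma \ref{lem2} and using Theorem \ref{coefficientes}, one has
\[
\eta(x^{\gamma})\eta(x^{\beta})\eta(x^{\lambda})=c_{\gamma,\beta}\,c_{\gamma+\beta,\lambda}\,\eta(x^{\gamma+\beta+\lambda})
\]
with a left invertible scalar, so $lm(\eta(x^{\gamma})\eta(x^{\beta})\eta(x^{\lambda}))=\eta(x^{\gamma+\beta+\lambda})$, and analogously for $\alpha$. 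Applying the monomiality of $\succeq_{gr}$ gives
\[
\eta(x^{\gamma+\beta+\lambda})\succeq_{gr}\eta(x^{\gamma+\alpha+\lambda}),
\]
which unravels to $x^{\gamma+\beta+\lambda}\succeq x^{\gamma+\alpha+\lambda}$. On the other side, expanding $x^{\gamma}x^{\beta}x^{\lambda}$ in $A$ with Theorem \ref{coefficientes} produces a sum whose unique top-degree contribution is a left-invertible multiple of $x^{\gamma+\beta+\lambda}$, all other summands lying in strictly smaller total degree. By the degree compatibility of $\succeq$ already established, this forces $lm(x^{\gamma}x^{\beta}x^{\lambda})=x^{\gamma+\beta+\lambda}$, and similarly $lm(x^{\gamma}x^{\alpha}x^{\lambda})=x^{\gamma+\alpha+\lambda}$. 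Combining these identifications yields $lm(x^{\gamma}x^{\beta}x^{\lambda})\succeq lm(x^{\gamma}x^{\alpha}x^{\lambda})$, as required.

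The delicate point I expect to be the main obstacle is the apparent circularity in (i): the notion of leading monomial presupposes a monomial order, yet we are in the middle of checking that $\succeq$ is one. The plan resolves this by first establishing degree compatibility (which is order-independent) and then invoking the fact, built into Theorem \ref{coefficientes}, that the top-degree part of $x^{\gamma}x^{\beta}x^{\lambda}$ is supported on the single monomial $x^{\gamma+\beta+\lambda}$. This forces the value of $lm$ on such triple products without any further information about $\succeq$ on lower-degree monomials, and the same rigidity holds for the corresponding products in $Gr(A)$, so the two sides match and the argument closes.
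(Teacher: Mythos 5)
Your proposal is correct and follows essentially the same route as the paper: verify the three axioms of Definition~\ref{monomialorder} by transporting along the bijection $\eta$, using the product formula $x^{\gamma}x^{\beta}x^{\lambda}=c\,x^{\gamma+\beta+\lambda}+(\text{lower degree})$ from Theorem~\ref{coefficientes} to identify the leading monomials of the triple products on both sides. Your only real departure is a welcome one: the paper simply cites the identity $\eta(lm(f))=lm(\eta(f))$ of Lemma~\ref{lem3}(ii), whose proof presupposes that $\succeq$ is already a monomial order, whereas you break the apparent circularity explicitly by establishing degree compatibility first and using it to pin down $lm(x^{\gamma}x^{\beta}x^{\lambda})=x^{\gamma+\beta+\lambda}$ independently of any further knowledge of $\succeq$.
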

\begin{proof}
Since  $\succeq_{gr}$ is a well order, from Definition
\ref{ordmon} it follows that $\succeq$ is a well order too. Now, we show that
$\succeq$ is a monomial order: indeed, let $x^{\alpha}$, $x^{\beta}$, $x^{\gamma}$,
$x^{\lambda}\in Mon(A)$  and suppose that $x^{\beta}\succeq x^{\alpha}$, so:
\[
\begin{cases}
\eta(x^{\beta})\succeq \eta(x^{\alpha})\\
\eta(lm(x^{\gamma}x^{\beta}x^{\lambda}))=lm(\eta(x^{\gamma}x^{\beta}x^{\lambda}))=
lm(\eta(x^{\gamma})\eta(x^{\beta})\eta(x^{\lambda}))\\
\eta(lm(x^{\gamma}x^{\alpha}x^{\lambda}))=lm(\eta(x^{\gamma}x^{\alpha}x^{\lambda}))=
lm(\eta(x^{\gamma})\eta(x^{\alpha})\eta(x^{\lambda}))\\
lm(\eta(x^{\gamma})\eta(x^{\beta})\eta(x^{\lambda}))\succeq_{gr}lm(\eta(x^{\gamma})\eta(x^{\alpha})\eta(x^{\lambda})),
\end{cases}
\]
and hence, $lm(x^{\gamma}x^{\beta}x^{\lambda})\succeq
lm(x^{\gamma}x^{\alpha}x^{\lambda})$. Clearly $x^{\alpha}\succeq 1$ for all $x^{\alpha}\in
Mon(A)$, and $\succeq$ is degree compatible.
\end{proof}

\begin{definition}
Let $I$ be a left (right or two side) ideal of $A$. The \textit{graduation} of
$I$ (or the \textit{associated graded ideal} to $I$) is defined as  $G(I):=\oplus_{p} Gr(I)_{p\in
\mathbb{N}}$, where $Gr(I)_{p}:=I\cap F_{p}(A)/ I\cap F_{p-1}(A)\cong (I+F_{p-1}(A))\cap
F_{p}(A)/F_{p-1}(A) $, for each $p\in \mathbb{N}$; (e.g., see \cite{Nastasescu}).
\end{definition}

Before proceeding, let us recall the definition of Gröbner basis.

\begin{definition}
Let $I\neq 0$ be a left ideal of $A$ and
let $G$ be a non empty finite subset of non-zero polynomials of
$I$, we say that $G$ is a Gröbner basis for $I$ if each element
$0\neq f\in I$ is reducible w.r.t. $G$.
\end{definition}

We have the following characterization for Gröbner bases.
\begin{theorem}\label{teogrobnersigmapbw}
Let $I\neq 0$ be a left ideal of $A$ and
let $G$ be a finite subset of non-zero polynomials of $I$. Then
the following conditions are equivalent:
\begin{enumerate}
\item[\rm(i)]$G$ is a Gröbner basis for $I$.
\item[\rm(ii)]For any polynomial $f\in A$,
\begin{center}
$f\in I$ if and only if $f\xrightarrow{\,\, G\,\, }_{+} 0$.
\end{center}
\item[\rm(iii)]For any $0\neq f\in I$ there exist $g_1,\dots ,g_t\in
G$ such that $lm(g_j)|lm(f)$, $1\leq j\leq t$, {\rm(}i.e., there
exist $\alpha_j\in \mathbb{N}^n$ such that
$\alpha_j+\exp(lm(g_j))=\exp(lm(f))${\rm)} and
\begin{center}
$lc(f)\in \langle \sigma^{\alpha_1}(lc(g_1))c_{\alpha_1,g_1},\dots
,\sigma^{\alpha_t}(lc(g_t))c_{\alpha_t,g_t}\}$.
\end{center}
\end{enumerate}
\end{theorem}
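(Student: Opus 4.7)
The natural strategy is to prove the equivalence (i) $\Leftrightarrow$ (iii), which turns out to be essentially tautological, and then the equivalence (i) $\Leftrightarrow$ (ii), where the real content lies.

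For (i) $\Leftrightarrow$ (iii): condition (iii) is just a coordinate-level unfolding of ``one-step reducibility by $G$.'' Explicitly, $f$ is reducible by $G$ iff one can subtract from $f$ a left-$A$-combination $\sum_{j=1}^t r_j\,x^{\alpha_j}g_j$ that matches $lm(f)$ in its leading term; by Theorem \ref{coefficientes}, the leading coefficient of $x^{\alpha_j}g_j$ equals $\sigma^{\alpha_j}(lc(g_j))\,c_{\alpha_j,g_j}$, so this cancellation is possible precisely when $lc(f)$ lies in the left ideal of $R$ generated by those elements. Hence (iii) is the statement that every $0\neq f\in I$ is reducible by $G$, which is the definition of (i).

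For (i) $\Rightarrow$ (ii): given $0\neq f\in I$, (i) produces a reduction $f\to f_1\in I$ with either $f_1=0$ or $lm(f_1)\prec lm(f)$. Iterating yields a sequence whose leading monomials strictly descend in $\succeq$; since $\succeq$ is a well-order (Definition \ref{monomialorder}), the sequence must terminate at $0$, giving $f\xrightarrow{\,\,G\,\,}_+ 0$. The converse is clear: a complete reduction writes $f$ as a left-$A$-linear combination of elements of $G\subseteq I$, so $f\in I$. Closing the cycle, (ii) $\Rightarrow$ (i) is immediate: for $0\neq f\in I$, (ii) gives $f\xrightarrow{\,\,G\,\,}_+ 0$, whose first step exhibits $f$ as reducible by $G$.

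The main obstacle is the termination argument in (i) $\Rightarrow$ (ii). One must verify that each reduction step genuinely lowers $lm$ in the admissible order, which requires choosing the $r_j$'s so that the twisted coefficients $\sigma^{\alpha_j}(lc(g_j))\,c_{\alpha_j,g_j}$ cancel $lc(f)$ exactly — a property obtained by Theorem \ref{coefficientes}, which isolates the leading term of $x^{\alpha_j}g_j$ and controls the lower-order remainder $p_{\alpha_j, g_j}$. Once strict descent of $lm$ is secured, the well-order property of $\succeq$ closes the argument.
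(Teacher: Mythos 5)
Your proposal is correct, but note that the paper does not prove this theorem at all: it is quoted verbatim from an external source (the proof field reads only ``See \cite{Gallego2}, Theorem 24''), so there is no internal argument to compare against. Your reconstruction --- (i)$\Leftrightarrow$(iii) as a definitional unfolding of one-step reducibility via the twisted leading coefficients $\sigma^{\alpha_j}(lc(g_j))c_{\alpha_j,g_j}$ from Theorem \ref{coefficientes}, and (i)$\Leftrightarrow$(ii) by iterating reductions inside $I$ until the strictly descending leading monomials force termination --- is exactly the standard argument, and it matches the structure of the cited proof (which runs the cycle (i)$\Rightarrow$(ii)$\Rightarrow$(iii)$\Rightarrow$(i) with the same ingredients). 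The only imprecision is attributing the well-order property directly to Definition \ref{monomialorder}; it is not listed there as an axiom, but it does follow from totality plus degree compatibility, since only finitely many monomials exist in each degree, so your termination step stands.
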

\begin{proof}
See \cite{Gallego2}, Theorem 24.
\end{proof}

\begin{theorem}\label{basesgrad}
Let $A$, $Gr(A)$, $Mon(A)$ and $Mon(Gr(A))$ as before, $\succeq$ a monomial order
over $Mon(A)$, and $I$ a left ideal of $A$. If $\overline{\mathcal{G}}=\{G_{j}\}_{j\in J}$ is a
Gröbner basis for $Gr(I)$, with respect to the monomial order $\succeq_{gr}$, and
such basis is formed by homogeneous elements, then $\mathcal{G}:=\{g_{j}\}_{j\in J}$
is a Gröbner basis for $I$, where $g_{j}\in I$ is a selected polynomial
with property that $\eta(g_{j})=G_{j}$ for each $j\in J$.
\end{theorem}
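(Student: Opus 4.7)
The plan is to verify that $\mathcal{G}$ is a Gröbner basis of $I$ by checking condition (iii) of Theorem \ref{teogrobnersigmapbw}. Pick an arbitrary $0\neq f\in I$ and let $p=\deg(f)$, so that $f\in F_p(A)\setminus F_{p-1}(A)$. Then $\eta(f)\in Gr(I)_p$ is nonzero, and the Gröbner hypothesis on $\overline{\mathcal{G}}$ (applied in $Gr(A)$ with the monomial order $\succeq_{gr}$ of Proposition \ref{mongrad}) yields indices $j_1,\ldots,j_t\in J$ such that $lm(G_{j_k})$ divides $lm(\eta(f))$ in $Mon(Gr(A))$ for every $k$, and such that $lc(\eta(f))$ belongs to the left ideal generated in $R$ by the products $\sigma^{\alpha_k}_{gr}(lc(G_{j_k}))\,c^{gr}_{\alpha_k,G_{j_k}}$, where $\alpha_k+\exp(lm(G_{j_k}))=\exp(lm(\eta(f)))$.

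The bulk of the argument is translating these two conditions back to $A$. For the leading monomials, Lemma \ref{lem3}(ii) gives $lm(\eta(f))=\eta(lm(f))$; moreover, since $G_j=\eta(g_j)$ is homogeneous the same lemma (applied to $g_j$, or directly from homogeneity) gives $lm(G_j)=\eta(lm(g_j))$. Because the correspondence $x^\alpha\mapsto\eta(x^\alpha)$ preserves exponents, divisibility in $Mon(Gr(A))$ is the same as divisibility in $Mon(A)$: from $\alpha_k+\exp(\eta(lm(g_{j_k})))=\exp(\eta(lm(f)))$ we obtain $\alpha_k+\exp(lm(g_{j_k}))=\exp(lm(f))$, i.e.\ $lm(g_{j_k})\mid lm(f)$.

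For the leading coefficients, reading off the expansions of $f$ and $\eta(f)$ in their respective $R$-bases (via Proposition \ref{basegrad}) shows that $lc(\eta(f))=lc(f)$ and $lc(G_j)=lc(g_j)$, again using homogeneity of $G_j$. Now by Theorem \ref{grcua} (and the remark that the structure constants defining $Gr(A)$ are the same $c_{i,r}$ and $c_{i,j}$ defining $A$) the maps $\sigma^{\alpha}$ and the constants $c_{\alpha,\cdot}$ that appear in the rewriting rules of $Gr(A)$ coincide with those of $A$. Consequently $\sigma^{\alpha_k}_{gr}(lc(G_{j_k}))\,c^{gr}_{\alpha_k,G_{j_k}}=\sigma^{\alpha_k}(lc(g_{j_k}))\,c_{\alpha_k,g_{j_k}}$, so the ideal-membership condition for $lc(\eta(f))$ transfers verbatim to the required one for $lc(f)$. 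Applying Theorem \ref{teogrobnersigmapbw}(iii) in the other direction now shows that $f$ is reducible with respect to $\mathcal{G}$, so $\mathcal{G}$ is a Gröbner basis of $I$.

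The step I expect to require the most care is the identification of $lm(G_j)$ and $lc(G_j)$ with $\eta(lm(g_j))$ and $lc(g_j)$: this is precisely where the hypothesis that $\overline{\mathcal{G}}$ consists of homogeneous elements is essential, since for a non-homogeneous $g_j$ one could not read the leading data of $g_j$ off of $\eta(g_j)=G_j$. Coupled with the invariance of the structure constants $\sigma^\alpha$ and $c_{\alpha,g}$ under passage to $Gr(A)$ (guaranteed by Theorem \ref{grcua}), this identification is what makes the back-and-forth between the two Gröbner theories clean.
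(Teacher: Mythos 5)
Your proposal is correct and follows essentially the same route as the paper: pass to $\eta(f)\in Gr(I)$, apply condition (iii) of Theorem \ref{teogrobnersigmapbw} in $Gr(A)$, and transfer the leading-monomial divisibility and leading-coefficient membership back to $A$ via Lemma \ref{lem3}(ii), the exponent-preserving bijection $x^{\alpha}\mapsto\eta(x^{\alpha})$, and the coincidence of the structure constants of $A$ and $Gr(A)$.
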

\begin{proof}
Let $0\neq f\in I \cap F_{p}(A)\setminus F_{p-1}(A)$; we shall show that the condition (iii)
in the Theorem \ref{teogrobnersigmapbw} is satisfied: let $\overline{f}:=\eta(f)$, then $0\neq \overline{f}\in
G(I)_{p}$. Since $\overline{\mathcal{G}}$ is a Gröbner basis of $G(I)$, there exist
$G_{1},\ldots, G_{t}\in \overline{\mathcal{G}}$ such that $lm(G_{j})\mid lm(\overline{f})$ for
each $1\leq j\leq t$ and $lc(\overline{f})\in \langle
\sigma^{\alpha_{1}}(lc(G_{1}))c_{\alpha_{1},G_{1}},\ldots,
\sigma^{\alpha_{t}}(lc(G_{t}))c_{\alpha_{t},G_{t}}\}$, with $\alpha_{j}\in \mathbb{N}^{n}$ such
that $\alpha_{j}+exp(lm(G_{j}))=exp(lm(\overline{f}))=exp(lm(f))=p$ and $c_{\alpha_{j},G_{j}}$ is
the coefficient determined by the product $\eta(x)^{\alpha_{j}}lm(G_{j})$ in $Gr(A)$, for
$1\leq j\leq t$. From this last it follows that $lm(\eta(x)^{\alpha_{j}}lm(G_{j}))=lm(\overline{f})$;
but $lm(\eta(x)^{\alpha_{j}}lm(G_{j}))=lm(\eta(x^{\alpha_{j}}x^{\beta_{j}}))$, where
$x^{\beta_{j}}:=lm(g_{j})$ y $g_{j}\in I \cap F_{p}(A)$ is such that $\eta(g_{j})=G_{j}$.
From Lemma \ref{lem3} we get that $lm(\eta(x^{\alpha_{j}}x^{\beta_{j}}))=
\eta(lm(x^{\alpha_{j}}x^{\beta_{j}})) \in F(A)_{p}/F(A)_{p-1}$, so that
$\eta(lm(x^{\alpha_{j}}x^{\beta_{j}}))=lm(\overline{f})=\eta(lm(f))$. The la\-tter implies that
$lm(x^{\alpha_{j}}x^{\beta_{j}})-lm(f)\in F_{p-1}(A)$ and, therefore,
$lm(x^{\alpha_{j}}x^{\beta_{j}})=lm(f)$, i.e., $lm(g_{j})\mid lm(f)$ for each $1\leq j\leq t$.
Further, $lc(h)=lc(\eta(h))$ for all $h\in A$, then $lc(f)\in \langle
\sigma^{\alpha_{1}}(lc(g_{1}))c_{\alpha_{1},g_{1}},\ldots,
\sigma^{\alpha_{t}}(lc(g_{t}))c_{\alpha_{t},g_{t}}\}$.
\end{proof}
In this way, a Gröbner basis of $Gr(I)$ can be transfer to a Gröbner basis of $I$. In particular, from a Gröbner basis of $Gr(I)$ we can get a set of generators for $I$. Reciprocally, whether we need obtain a generating set of $Gr(I)$  from one of $I=\langle f_{1},\ldots,f_{r}\}$, we could think that $Gr(I)=\langle \eta(f_{1}),\ldots,
\eta(f_{r})\}$. Nevertheless, this a\-ffir\-mation in general is not true: in fact,
let $A=A_{2}(\Bbbk)$, the second Weyl algebra, i.e., $A=\Bbbk[x_{1}, x_{2}][y_{1}, \frac{\partial}{\partial x_{1}}]
[y_{2}, \frac{\partial}{\partial x_{2}}]$ with its associated standard filtration, and
consider the left ideal $I$ generated by $f_{1}=x_{1}y_{1}$  and $f_{2}=x_{2}y_{1}^{2}-y_{1}$.
Note that $x_{1}\in I$, since $x_{1}=(t_{2}x_{1}^{2}-x_{1})f_{1}- (t_{1}x_{1}+2)f_{2}$, but
$\eta(x_{1})\notin \langle \eta(f_{1}), \eta(f_{2})\}$, where
$\eta(f_{1})=\eta(t_{1})\eta(x_{1})\in Gr(I)_{1}$ and
$\eta(f_{2})=\eta(t_{2})\eta(x_{1})^{2}\in Gr(I)_{2}$ (see \cite{Li}). However, if $G=\{f_{1},\ldots,f_{r}\}$
is a Gröbner basis for $I$, we will show that $\eta(G)=\{\eta(f_{1}),\ldots,\eta(f_{r})\}$ is a Gröbner basis
for $Gr(I)$ and, from this we will take a generating set for $Gr(I)$.

\begin{theorem}
With notation as above, let $\mathcal{G}=\{g_{i}\}_{i\in J}$ be a
Gröbner basis for a left ideal $I$ of $A$. Then
$\overline{\mathcal{G}}=\{\eta({g}_{i})\}_{i\in J}$ is a Gröbner
basis of $Gr(I)$ consisting of homogeneous
elements.
\end{theorem}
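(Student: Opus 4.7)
The strategy is to verify condition (iii) of Theorem \ref{teogrobnersigmapbw} for $\overline{\mathcal{G}}$ inside $Gr(A)$, which is itself a (bijective, when $A$ is) quasi-commutative skew $PBW$ extension by Theorem \ref{grcua}, so the characterization applies. Homogeneity of each $\eta(g_i)$ is immediate from the definition of the principal symbol: $\eta(g_i) \in F_{p_i}(A)/F_{p_i-1}(A) = Gr(A)_{p_i}$, where $p_i = \deg(g_i)$. So the only real content is the Gröbner basis property.

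Because $\succeq_{gr}$ is degree compatible and $Gr(I) = \bigoplus_p Gr(I)_p$, for any $0 \neq F \in Gr(I)$ the leading monomial and leading coefficient coincide with those of its top-degree homogeneous component, so I would reduce at once to the case of a nonzero homogeneous $F \in Gr(I)_p$. By definition of $Gr(I)_p$ such an $F$ lifts to some $f \in I \cap F_p(A) \setminus F_{p-1}(A)$ with $F = \eta(f)$; in particular $\deg(f) = p$, while Lemma \ref{lem3}(ii) gives $lm(F) = \eta(lm(f))$ and $lc(F) = lc(f)$.

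Now I would apply the Gröbner basis property of $\mathcal{G}$ to $f$: there exist $g_{j_1},\ldots,g_{j_t}\in\mathcal{G}$ and $\alpha_1,\ldots,\alpha_t\in\mathbb{N}^n$ with $\alpha_k + \exp(lm(g_{j_k})) = \exp(lm(f))$ and
\[
lc(f) \in \langle \sigma^{\alpha_1}(lc(g_{j_1}))\,c_{\alpha_1,g_{j_1}},\ldots,\sigma^{\alpha_t}(lc(g_{j_t}))\,c_{\alpha_t,g_{j_t}}\}.
\]
The divisibility condition transfers to $Gr(A)$ directly: since $\eta$ is an exponent-preserving bijection from $Mon(A)$ to $Mon(Gr(A))$, the equality $\alpha_k + \exp(lm(\eta(g_{j_k}))) = \exp(lm(F))$ holds in $\mathbb{N}^n$, i.e., $lm(\eta(g_{j_k})) \mid lm(F)$. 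The coefficient condition transfers because, as observed in the proof of Theorem \ref{grcua}, the constants $c_{i,r}$ and $c_{i,j}$ that define $Gr(A)$ as a quasi-commutative skew $PBW$ extension are the very same ones that define $A$. Thus $\sigma^{\alpha_k}$ and $c_{\alpha_k,\eta(g_{j_k})}$ computed inside $Gr(A)$ agree with $\sigma^{\alpha_k}$ and $c_{\alpha_k,g_{j_k}}$ computed inside $A$, and combining this with $lc(\eta(g_{j_k})) = lc(g_{j_k})$ gives condition (iii) for $F$.

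The main obstacle I anticipate is purely bookkeeping at the level of structure constants: one must pin down that $c_{\alpha_k,\eta(g_{j_k})}$ in $Gr(A)$ coincides with $c_{\alpha_k,g_{j_k}}$ in $A$, rather than with some perturbed constant coming from a different lift. The clean way to handle this is to expand $x^{\alpha_k}\cdot lm(g_{j_k}) = c_{\alpha_k,g_{j_k}}\,x^{\alpha_k+\exp(lm(g_{j_k}))} + (\text{lower})$ from Theorem \ref{coefficientes}(b) and then apply $\eta$ using Lemma \ref{lem2}; the lower-order remainder dies modulo $F_{p-1}(A)$, leaving exactly the same constant in $Gr(A)$. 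Once this identification is recorded, the rest of the argument is symbolic transport of the membership relation across $\eta$.
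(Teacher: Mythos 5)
Your proof follows essentially the same route as the paper's: reduce to a nonzero homogeneous $F\in Gr(I)_{p}$, lift it to $f\in I\cap F_{p}(A)\setminus F_{p-1}(A)$, apply condition (iii) of Theorem \ref{teogrobnersigmapbw} to $f$ in $A$, and transport the divisibility and coefficient conditions through $\eta$ via Lemma \ref{lem3}. Your explicit justification that $c_{\alpha_k,\eta(g_{j_k})}$ in $Gr(A)$ equals $c_{\alpha_k,g_{j_k}}$ in $A$ (via Theorem \ref{coefficientes}(b) and Lemma \ref{lem2}) is a detail the paper passes over silently, but it does not change the argument.
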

\begin{proof}
Since $Gr(I)$ is a homogeneous ideal, it suffices to show that every nonzero
homogeneous element $F\in Gr(I)$ satisfies the condition (iii)
in the Theorem \ref{teogrobnersigmapbw}.
Let $0\neq F\in Gr(I)_{p}$, then $F=\eta(f)$ for some
$f\in I\cap F_{p}(A)-I\cap F_{p-1}(A)$ and there exist $g_1,\ldots,g_t\in \mathcal{G}$
with the property that $lm(g_i)\mid lm(f)$ and
$lc(f)\in \langle \sigma^{\alpha_1}(lc(g_1))c_{\alpha_1,g_1},\ldots, \sigma^{\alpha_t}(lc(g_t))c_{\alpha_t,g_t}\}$,
where $\alpha_{i}\in \mathbb{N}^n$ is such that $\alpha_i+\exp(g_{i})=\exp(f)$
for each $1\leq i\leq t$.
By Lemma \ref{lem3} we have that $\eta(lm(f))=lm(\eta(f))=lm(F)$, then
$lm(\eta(g_i))\mid lm(F)$. Further,
since $lc(f)=lc(\eta(f))=lc(F)$, it follows that
$lc(F)\in \langle \sigma^{\alpha_1}(lc(\eta(g_1)))c_{\alpha_1,\eta(g_1)},\ldots,
\sigma^{\alpha_t}(lc(\eta(g_t)))c_{\alpha_t,\eta(g_t)}\}$ and, in consequence
$\overline{\mathcal{G}}$ is a Gröbner basis for $Gr(I)$.
\end{proof}

\section{For modules}
\noindent
Similar results to those presented in the previous section can be proved in the case of modules. For this,
let $M$ be a submodule of the free module $A^{m}$, $m\geq 1$, where
$A$ is a skew $PBW$ extension of a ring $R$. Define the following collection of subsets
of $M$:
\begin{equation}\label{filmod}
F_{p}(M):=\{\boldsymbol{f}\in M\mid \text{deg}(\boldsymbol{f})\leq p\}.
\end{equation}
It is not difficult to show that the collection $\{F_{p}(M)\}_{p\geq 0}$ given in
\ref{filmod} is a filtration for $M$, called the \textit{\textit{natural filtration}} on $M$.
With this filtration we can define the graded module associated to $M$, which will be denoted by
$Gr(M)$, in the following way: $Gr(M):=\oplus_{p\geq 0}F_{p}(M)/F_{p-1}(M)$;
if $\boldsymbol{f}\in F_{p}(M)-F_{p-1}(M)$,
then $\boldsymbol{f}$ is said to have degree $p$. Thus, we may associate to $\boldsymbol{f}$
its \textit{principal symbol}
$\eta(\boldsymbol{f}):=\boldsymbol{f}+F_{p-1}(M)\in G_{p}(M)=F_{p}(M)/F_{p-1}(M)$.
The $Gr(A)$-structure is given by, via distributive laws, the following multiplication:
\begin{equation}
\eta(r)\eta(\boldsymbol{f}):=
\begin{cases}
\eta(r\boldsymbol{f}),  &  \text{if $r\boldsymbol{f}\notin F_{i+j-1}(M)$},\\
0,  &  \text{otherwise}
\end{cases}
\end{equation}
where $r\in F_{i}(A)-F_{i-1}(A)$ and $\boldsymbol{f}\in F_{j}(M)-F_{j-1}(M)$.

Notice that any filtration $\{F_{p}(M)\}_{p\in \mathbb{Z}}$ on $M$ defines an order function
$v:M\to \mathbb{Z}$ in the following way:
\begin{center}
  $v(\boldsymbol{f}):= \begin{cases}
     p,  &  \text{if }\boldsymbol{f}\in F_{p}(M)-F_{p-1}(M),\\
     -\infty,  &  \text{if } \boldsymbol{f}\in \cap_{p\in \mathbb{Z}}F_p(M).
      \end{cases}$
\end{center}

\begin{lemma}\label{lem1mod}
Let $A$, $M$ and $\{F_{p}(M)\}_{p}$ as above. Then for each $\boldsymbol{f}\in M$,
deg$(\boldsymbol{f})=v(\boldsymbol{f})$.
\end{lemma}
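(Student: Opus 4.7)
The plan is to mimic exactly the argument given for Lemma \ref{lem1}(i) in the ring case, since the filtration $\{F_p(M)\}$ on $M$ is defined by the same degree-bound prescription. The only input needed is that the degree function on elements of $A^m$ behaves well with respect to the sets $F_p(M)$, which is immediate from (\ref{filmod}).

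First I would dispose of the trivial case $\boldsymbol{f} = 0$ by the convention $\deg(0) = -\infty$ and by observing that $0 \in F_p(M)$ for every $p \in \mathbb{Z}$, so $v(0) = -\infty$ as well. Next, for $0 \neq \boldsymbol{f} \in M$, set $p := \deg(\boldsymbol{f}) \in \mathbb{N}$. By the defining formula (\ref{filmod}), $\boldsymbol{f} \in F_p(M)$ because $\deg(\boldsymbol{f}) \leq p$, while $\boldsymbol{f} \notin F_{p-1}(M)$ because $\deg(\boldsymbol{f}) = p > p-1$. Therefore $\boldsymbol{f} \in F_p(M) - F_{p-1}(M)$, which by the definition of the order function gives $v(\boldsymbol{f}) = p = \deg(\boldsymbol{f})$.

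The converse direction is equally immediate: if $v(\boldsymbol{f}) = p$, then $\boldsymbol{f} \in F_p(M) - F_{p-1}(M)$, and applying (\ref{filmod}) to both membership conditions yields $\deg(\boldsymbol{f}) \leq p$ and $\deg(\boldsymbol{f}) > p-1$, so $\deg(\boldsymbol{f}) = p$. No real obstacle arises here; the statement is essentially a translation between two equivalent descriptions of the same filtration, and the only potential subtlety (the behaviour of $v$ on the intersection $\cap_p F_p(M)$) collapses to the zero element alone, since $F_p(M) = 0$ for $p \leq -1$.
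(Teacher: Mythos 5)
Your proposal is correct and follows essentially the same argument as the paper's proof: both identify the unique $p$ with $\boldsymbol{f}\in F_{p}(M)-F_{p-1}(M)$ and observe that this membership is equivalent to $\deg(\boldsymbol{f})=p$ by the defining formula (\ref{filmod}). The only difference is cosmetic (you start from $p:=\deg(\boldsymbol{f})$ while the paper starts from the filtration level, and you treat the zero vector explicitly), so nothing further is needed.
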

\begin{proof}
From definition of $\{F_{p}(M)\}_{p\geq 0}$, it follows that if
$\textbf{0}\neq \boldsymbol{f}\in M$, then there exists $p\in \mathbb{N}$ such that
$\boldsymbol{f}\in F_{p}(M)-F_{p-1}(M)$ and, therefore, $v(\boldsymbol{f})=p$.
But, if $f\in F_{p}(M)-F_{p-1}(M)$, then deg$(\boldsymbol{f})=p$ and we obtain the equality.
\end{proof}
We have a version of the Proposition \ref{mongrad} for module case. For this, remember that
the monomials in $Gr(A)^{m}$ are given by $\overline{\textbf{\emph{X}}}=\eta(\textbf{\emph{X}}):=\eta(x^{\alpha})\overline{\textbf{\emph{e}}}_i$,
where $\overline{\textbf{\emph{e}}}_i$ is a canonical vector of $Gr(A)^{m}$. We also recall the definition of  monomial orders on $Mon(A^m)$.
\begin{definition}\label{monordermod}
A monomial order on $Mon(A^m)$ is a total order $\succeq$
satisfying the following three conditions:
\begin{enumerate}
\item[\rm(i)] $lm(x^{\beta}x^{\alpha})\textbf{e}_{i}\succeq x^{\alpha}\textbf{e}_{i}$, for every
monomial $\textbf{X}=x^{\alpha}\textbf{e}_{i}\in Mon(A^{m})$ and any monomial $x^{\beta}$ in
$Mon(A)$.
\item[\rm(ii)] If $\textbf{Y}=x^{\beta}\textbf{e}_{j}\succeq \textbf{X}=x^{\alpha}\textbf{e}_{i}$, then
$lm(x^{\gamma}x^{\beta})\textbf{e}_{j}\succeq lm(x^{\gamma}x^{\alpha})\textbf{e}_{i}$ for every
monomial $x^{\gamma}\in Mon(A)$.
\item[\rm (iii)]$\succeq$ is degree compatible, i.e., $\deg(\textbf{X})\geq
\deg(\textbf{Y})\Rightarrow \textbf{X}\succeq
\textbf{Y}$.
\end{enumerate}
\end{definition}

\begin{proposition}\label{mongradmod}
If $>$ is a monomial order on $Mon(A^m)$, then relation $>_{gr}$ defined over
$Mon(Gr(A)^m)$ by
\begin{align}\label{ordgradmod}
\eta(\textbf{X})>_{gr} \eta(\textbf{Y})\Leftrightarrow \textbf{X} > \textbf{Y}
\end{align}
is a monomial order for $Mon(Gr(A)^m)$.
\end{proposition}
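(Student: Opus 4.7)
The plan is to mirror the proof of Proposition \ref{mongrad} almost verbatim, since the extra canonical-basis index $i$ does not interact with the principal-symbol map $\eta$, which acts componentwise on $A^m$. First, I would observe that $x^\alpha \textbf{e}_i \mapsto \eta(x^\alpha)\overline{\textbf{e}}_i$ is a bijection between $Mon(A^m)$ and $Mon(Gr(A)^m)$ --- a consequence of part (iii) of Lemma \ref{lem1} together with the linear independence of the $\textbf{e}_i$'s --- so (\ref{ordgradmod}) transports the total-order structure of $\succeq$ to a well-defined total order $\succeq_{gr}$ on $Mon(Gr(A)^m)$.

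Next I would verify the three conditions of Definition \ref{monordermod} for $\succeq_{gr}$. The decisive identity, already extracted inside the proof of Proposition \ref{mongrad}, is $\eta(lm(x^\gamma x^\delta)) = lm(\eta(x^\gamma)\eta(x^\delta))$; it follows from Theorem \ref{coefficientes} combined with Lemma \ref{lem2}. For condition (i), I would apply it with $\gamma = \beta$, $\delta = \alpha$ to rewrite $lm(\eta(x^\beta)\eta(x^\alpha))\overline{\textbf{e}}_i$ as $\eta(lm(x^\beta x^\alpha))\overline{\textbf{e}}_i$, then use condition (i) for $\succeq$ on $Mon(A^m)$ together with (\ref{ordgradmod}). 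For condition (ii), assuming $\eta(\textbf{Y}) \succeq_{gr} \eta(\textbf{X})$, by definition $\textbf{Y} \succeq \textbf{X}$, so condition (ii) for $\succeq$ gives $lm(x^\gamma x^\beta)\textbf{e}_j \succeq lm(x^\gamma x^\alpha)\textbf{e}_i$; pushing both sides through $\eta$ (invoking the same identity twice) produces the required inequality in $Mon(Gr(A)^m)$. Condition (iii) is immediate, since $\deg(\eta(x^\alpha)\overline{\textbf{e}}_i) = |\alpha| = \deg(x^\alpha \textbf{e}_i)$ and $\succeq$ is degree-compatible.

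I do not expect a genuine obstacle: the entire algebraic content is the identity $\eta \circ lm = lm \circ \eta$ on products of monomials, which was already proved in the ring case. The remaining work is bookkeeping on the index $i$. The only minor subtlety worth flagging is that the product $\eta(x^\gamma)\eta(x^\delta)$ in $Gr(A)$ must be nonzero for $lm(\eta(x^\gamma)\eta(x^\delta))$ to make sense; this is guaranteed by Theorem \ref{grcua}, which identifies $Gr(A)$ as a quasi-commutative skew $PBW$ extension in which such products have a well-defined leading monomial.
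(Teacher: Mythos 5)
Your proof is correct and follows essentially the same route as the paper's: both verify the three conditions of Definition \ref{monordermod} by reducing each to the corresponding condition for $>$ on $Mon(A^m)$ via the identity $\eta(lm(x^{\gamma}x^{\delta}))=lm(\eta(x^{\gamma})\eta(x^{\delta}))$ already extracted in the proof of Proposition \ref{mongrad}, then transferring back through (\ref{ordgradmod}). Your extra remarks on the well-definedness of $>_{gr}$ and the nonvanishing of products of principal symbols are sound additions but do not change the argument.
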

\begin{proof}
We will show that $\succeq_{gr}$  satisfies the conditions in the Definition
\ref{monordermod}: to begin, note that $>_{gr}$ is a total order because $>$ it is.
Now, to prove (i) we must show that $lm(\eta(x^{\beta})\eta(x^{\alpha}))\overline{\textbf{\emph{e}}}_i \geq_{gr}\eta(x^{\alpha})\overline{\textbf{\emph{e}}}_i$
for every $\overline{\textbf{\emph{X}}}=\eta(x^{\alpha})\overline{\textbf{\emph{e}}}_i\in Mon(Gr(A)^m)$ and
$\eta(x^{\beta})\in Mon(Gr(A))$. It can be noted that,
\begin{center}
$lm(\eta(x^{\beta})\eta(x^{\alpha}))\overline{\textbf{\emph{e}}}_i \geq_{gr}\eta(x^{\alpha})\overline{\textbf{\emph{e}}}_i
\Leftrightarrow
\eta(lm(x^{\beta}x^{\alpha}))\overline{\textbf{\emph{e}}}_i \geq_{gr}\eta(x^{\alpha})\overline{\textbf{\emph{e}}}_i$.
\end{center}
Since $>$ is a monomial order on $Mon(A^m)$, we have that
$lm(x^{\beta}x^{\alpha})\textbf{\emph{e}}_i \geq x^{\alpha}\textbf{\emph{e}}_i$ and, from (\ref{ordgradmod})
it follows that
$\eta(lm(x^{\beta}x^{\alpha}))\overline{\textbf{\emph{e}}}_i \geq_{gr}\eta(x^{\alpha})\overline{\textbf{\emph{e}}}_i$.
So,
$lm(\eta(x^{\beta})\eta(x^{\alpha}))\overline{\textbf{\emph{e}}}_i \geq_{gr}\eta(x^{\alpha})\overline{\textbf{\emph{e}}}_i$.
\\
For (ii), let $\overline{\textbf{\emph{Y}}}=\eta(x^{\beta})\overline{\textbf{\emph{e}}}_j$ and
$\overline{\textbf{\emph{X}}}=\eta(x^{\alpha})\overline{\textbf{\emph{e}}}_i$ monomials
in $Mon(Gr(A)^{m})$ such that $\overline{\textbf{\emph{Y}}} \geq_{gr} \overline{\textbf{\emph{X}}}$. Given
$\eta(x^{\gamma})\in Mon(Gr(A))$, we have
\begin{center}
$lm(\eta(x^{\gamma})\eta(x^{\beta}))\overline{\textbf{\emph{e}}}_j \geq_{gr} lm(\eta(x^{\gamma})\eta(x^{\alpha}))\overline{\textbf{\emph{e}}}_i
\Leftrightarrow
\eta(lm(x^{\gamma}x^{\beta}))\overline{\textbf{\emph{e}}}_j \geq_{gr} \eta(lm(x^{\gamma}x^{\alpha}))\overline{\textbf{\emph{e}}}_i$.
\end{center}
In $Mon(A)$ we get that $lm(x^{\gamma}x^{\beta})\textbf{\emph{e}}_j \geq lm(x^{\gamma}x^{\alpha})\textbf{\emph{e}}_i$
and, once again, from (\ref{ordgradmod}) it follows that \linebreak
$\eta(lm(x^{\gamma}x^{\beta}))\overline{\textbf{\emph{e}}}_j  \geq_{gr} \eta(lm(x^{\gamma}x^{\alpha}))\overline{\textbf{\emph{e}}}_i$.\\
Finally is easily verifiable that $\geq_{gr}$ is degree compatible.
\end{proof}

\begin{lemma}\label{lem3mod}
Let $A$, $M$, $Gr(A)$, $Gr(M)$ and $<$ as before, and consider an arbitrary element $\boldsymbol{f}\in M$.
Then,
\begin{enumerate}
\item[\rm (i)] $\boldsymbol{f}\in F_{p}(M)$ if, and only if, deg$(\boldsymbol{f})\leq p$. Further,
$\boldsymbol{f}\in F_{p}(M)- F_{p-1}(M)$ if, and only, if deg$(\boldsymbol{f})=p$.
\item[\rm (ii)] $\eta(lm(\boldsymbol{f}))=lm(\eta(\boldsymbol{f}))$.
\end{enumerate}
\end{lemma}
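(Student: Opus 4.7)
The plan is to mirror the proof of Lemma \ref{lem3} for ideals, adapting each step to the module setting where monomials carry a canonical-basis index. Both parts reduce to unpacking definitions: part (i) to the definition of the natural filtration \eqref{filmod} together with Lemma \ref{lem1mod}, and part (ii) to expanding $\boldsymbol{f}$ in the $R$-basis of $A^{m}$ given by $Mon(A^{m})$ and comparing leading monomials via the defining equivalence \eqref{ordgradmod}.

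For (i), I would argue both directions at once. If $\boldsymbol{f}\in F_{p}(M)$, then by the very definition of $F_{p}(M)$ in \eqref{filmod} we have $\deg(\boldsymbol{f})\leq p$. Conversely, if $\deg(\boldsymbol{f})\leq p$, the same definition puts $\boldsymbol{f}$ in $F_{p}(M)$. The refined statement follows by combining this with Lemma \ref{lem1mod}: $\boldsymbol{f}\in F_{p}(M)-F_{p-1}(M)$ iff $\deg(\boldsymbol{f})\leq p$ and $\deg(\boldsymbol{f}) > p-1$, i.e., iff $\deg(\boldsymbol{f})=p$.

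For (ii), let $\boldsymbol{f}\neq \textbf{0}$ lie in $F_{p}(M)-F_{p-1}(M)$, so that $\deg(\boldsymbol{f})=p$ by (i). Write $\boldsymbol{f}=\sum_{k=1}^{s} \lambda_{k} x^{\alpha_{k}}\textbf{\emph{e}}_{i_{k}}$ with $\lambda_{k}\in R\setminus\{0\}$ and monomials $\textbf{\emph{X}}_{k}:=x^{\alpha_{k}}\textbf{\emph{e}}_{i_{k}}\in Mon(A^{m})$ ordered by $\textbf{\emph{X}}_{1}\succ \textbf{\emph{X}}_{2}\succ \cdots \succ \textbf{\emph{X}}_{s}$. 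Since $\succeq$ is degree compatible, $\textbf{\emph{X}}_{1}$ has the maximal degree $p$, hence $lm(\boldsymbol{f})=\textbf{\emph{X}}_{1}$. Passing to the graded module, only those summands with $\deg(\textbf{\emph{X}}_{k})=p$ survive, so
\[
\eta(\boldsymbol{f}) \;=\; \sum_{\deg(\textbf{\emph{X}}_{k})=p} \lambda_{k}\, \eta(x^{\alpha_{k}})\overline{\textbf{\emph{e}}}_{i_{k}} \;\in\; Gr(M)_{p}.
\]
By the definition \eqref{ordgradmod} of $\succeq_{gr}$, the surviving term of largest $\succeq_{gr}$-weight is precisely the one coming from $\textbf{\emph{X}}_{1}$, namely $\eta(x^{\alpha_{1}})\overline{\textbf{\emph{e}}}_{i_{1}}=\eta(\textbf{\emph{X}}_{1})$. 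Thus $lm(\eta(\boldsymbol{f}))=\eta(\textbf{\emph{X}}_{1})=\eta(lm(\boldsymbol{f}))$, as required.

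I expect no real obstacle; the only subtle point is to check that after applying $\eta$ we still identify $\textbf{\emph{X}}_{1}$ as the largest surviving monomial. This is automatic from degree compatibility of $\succeq$ and the definition \eqref{ordgradmod}, which guarantees that the $\succeq$-order restricted to the top-degree monomials transfers verbatim to the $\succeq_{gr}$-order on their images. The zero case $\boldsymbol{f}=\textbf{0}$ is trivial under the standard convention $\eta(\textbf{0})=\textbf{0}$ and $lm(\textbf{0})$ being either undefined or $\textbf{0}$.
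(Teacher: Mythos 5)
Your proposal is correct and follows essentially the same route as the paper: part (i) by unwinding the definition of $F_{p}(M)$ together with Lemma \ref{lem1mod}, and part (ii) by expanding $\boldsymbol{f}$ in the monomial basis of $A^{m}$, identifying $lm(\boldsymbol{f})$ as the top monomial, and transferring the order to $Gr(M)$ via the definition of $\geq_{gr}$. Your explicit remark that degree compatibility forces the leading monomial to have the maximal degree $p$ (and hence to survive under $\eta$) is a point the paper leaves implicit, but it is the same argument.
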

\begin{proof}
(i) It follows from the definition of $F_{p}(M)$ and Lemma \ref{lem1mod}. \\
(ii) Let $\boldsymbol{f}$ be a nonzero vector in $M$, then there exists $p\in \mathbb{N}$ such that
$\boldsymbol{f}\in F_{p}(M)-F_{p-1}(M)$. Thus, $\boldsymbol{f}=\sum_{i=1}^{l} \lambda_{i}\boldsymbol{X}_{i}$
with $\lambda_{i}\in R\setminus \{0\}$, $\boldsymbol{X}_{i}\in Mon(A^m)$ where deg$(\boldsymbol{X}_{i})\leq p$
 for each $1\leq i\leq l$, and
$\boldsymbol{X}_{1}>\boldsymbol{X}_{2}>\cdots > \boldsymbol{X}_{l}$. Whence,
$lm(\boldsymbol{f})=\boldsymbol{X}_{1}$ and since deg$(\boldsymbol{f})=p$ and
$\eta(\boldsymbol{f})=\sum_{|\exp(\boldsymbol{X}_i)|=p}\lambda_{i}\eta(\boldsymbol{X}_i)$,
from the definition given for $\geq_{gr}$, we have that
$lm(\eta(\boldsymbol{f}))=\eta(\boldsymbol{X}_{1})=\eta(lm(\boldsymbol{f}))$.
\end{proof}
The conversely of Proposition \ref{mongradmod}  is also true, as it is shown below.
\begin{proposition}
With the same notation used so far, if $\geq_{gr}$ a monomial order on
$Mon(Gr(A)^m)$, then $\geq$ defined as
\begin{align}\label{ordmonmod}
\textbf{X} \geq \textbf{Y}\Leftrightarrow \eta(\textbf{X})\geq_{gr} \eta(\textbf{Y})
\end{align}
is a monomial order over  $Mon(A^m)$.
\end{proposition}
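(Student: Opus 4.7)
The plan is to mirror the proof of Proposition \ref{gradorder} but working in the module setting, using the module version of Lemma \ref{lem3} (namely Lemma \ref{lem3mod}) which guarantees that $\eta$ intertwines $lm$ with itself. The key observation is that the assignment $\textbf{X}\mapsto \eta(\textbf{X})$ is a bijection from $Mon(A^m)$ onto $Mon(Gr(A)^m)$: indeed, every monomial of $Gr(A)^m$ has the form $\eta(x^{\alpha})\overline{\textbf{e}}_i=\eta(x^{\alpha}\textbf{e}_i)$, and distinct standard monomials $x^{\alpha}\textbf{e}_i,x^{\beta}\textbf{e}_j\in Mon(A^m)$ produce distinct symbols, by the module-version of Lemma \ref{lem1}(iii). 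Hence (\ref{ordmonmod}) transports the total order $\geq_{gr}$ back to a well-defined total order $\geq$ on $Mon(A^m)$.

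With $\geq$ in place, I would verify the three axioms of Definition \ref{monordermod} one by one. For (i), given $\textbf{X}=x^{\alpha}\textbf{e}_i\in Mon(A^m)$ and $x^{\beta}\in Mon(A)$, apply $\eta$: by Lemma \ref{lem3mod}(ii) and Lemma \ref{lem2} we have
\[
\eta(lm(x^{\beta}x^{\alpha})\textbf{e}_i)=\eta(lm(x^{\beta}x^{\alpha}))\overline{\textbf{e}}_i=lm(\eta(x^{\beta})\eta(x^{\alpha}))\overline{\textbf{e}}_i,
\]
and by axiom (i) for $\geq_{gr}$ the latter is $\geq_{gr}\eta(x^{\alpha})\overline{\textbf{e}}_i=\eta(\textbf{X})$. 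By (\ref{ordmonmod}) this gives $lm(x^{\beta}x^{\alpha})\textbf{e}_i\geq\textbf{X}$. For (ii), assume $\textbf{Y}=x^{\beta}\textbf{e}_j\geq\textbf{X}=x^{\alpha}\textbf{e}_i$; then $\eta(\textbf{Y})\geq_{gr}\eta(\textbf{X})$, so axiom (ii) for $\geq_{gr}$ yields
\[
lm(\eta(x^{\gamma})\eta(x^{\beta}))\overline{\textbf{e}}_j\geq_{gr} lm(\eta(x^{\gamma})\eta(x^{\alpha}))\overline{\textbf{e}}_i,
\]
and using $\eta(lm(x^{\gamma}x^{\beta}))=lm(\eta(x^{\gamma})\eta(x^{\beta}))$ (and the analogous equality for $\alpha$), this rewrites via (\ref{ordmonmod}) as $lm(x^{\gamma}x^{\beta})\textbf{e}_j\geq lm(x^{\gamma}x^{\alpha})\textbf{e}_i$, as required. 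Finally, (iii) follows because $\deg(\textbf{X})=\deg(\eta(\textbf{X}))$ by Lemma \ref{lem3mod}(i), so degree compatibility of $\geq_{gr}$ transfers directly to $\geq$.

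The only delicate point is making sure that the identity $\eta(lm(x^{\gamma}x^{\beta}))=lm(\eta(x^{\gamma})\eta(x^{\beta}))$ and its module-with-basis-vector analogue really do hold; this is precisely Lemma \ref{lem3mod}(ii) combined with Lemma \ref{lem2}, which states that $\eta$ is multiplicative on standard monomials of $A$. Once that identification is invoked, everything else is a bookkeeping translation through the bijection $\eta$, and no new combinatorial content is needed beyond what is already established in Proposition \ref{gradorder} for the ring case.
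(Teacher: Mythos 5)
Your proposal is correct and follows essentially the same route as the paper: transport the order through $\eta$, use the identity $\eta(lm(\cdot))=lm(\eta(\cdot))$ on monomials to convert each axiom of Definition \ref{monordermod} for $\geq_{gr}$ into the corresponding axiom for $\geq$, and note that degree compatibility transfers because $\eta$ preserves degree. Your explicit remark that $\textbf{X}\mapsto\eta(\textbf{X})$ is a bijection (so that $\geq$ is well defined as a total order) is a small point the paper leaves implicit, but it does not change the argument.
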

\begin{proof}
Since  $\geq_{gr}$ is a total order, from Definition
\ref{ordmonmod} it follows that $\geq$ is a total order also. Now, we show that
$\geq$ is a monomial order: indeed, let $x^{\beta}\in Mon(A)$ and
$\textbf{\emph{X}}=x^{\alpha}\textbf{\emph{e}}_i$ an element in $Mon(A^m)$;
we must to show $lm(x^{\gamma}x^{\beta})\textbf{\emph{e}}_i\geq x^{\alpha}\textbf{\emph{e}}_i$ for
all $x^{\gamma}\in Mon(A)$; however
\begin{center}
$\eta(lm(x^{\gamma}x^{\beta}))\overline{\textbf{\emph{e}}}_i\geq \eta(x^{\alpha})\overline{\textbf{\emph{e}}}_i
\Leftrightarrow
lm(\eta(x^{\gamma})\eta(x^{\beta}))\overline{\textbf{\emph{e}}}_i\geq x^{\alpha}\overline{\textbf{\emph{e}}}_i$
\end{center}
and since $\geq_{gr}$ is a monomial order, this last inequality is true.
From (\ref{ordmonmod}) it follows that
$lm(x^{\gamma}x^{\beta})\textbf{\emph{e}}_i\geq x^{\alpha}\textbf{\emph{e}}_i$, as we had to show. Now, if $\textbf{\emph{Y}}=x^{\beta}\textbf{\emph{e}}_j$ and
$\textbf{\emph{X}}=x^{\alpha}\textbf{\emph{e}}_i$ are monomials in $Mon(A^{m})$
such that $\textbf{\emph{Y}}\geq \textbf{\emph{X}}$, then
$\eta(\textbf{\emph{Y}})\geq_{gr} \eta(\textbf{\emph{X}})$.
Thus, given $\eta(x^{\gamma})\in Mon(Gr(A))$ we have that
\begin{center}
$lm(\eta(x^{\gamma})\eta(x^{\beta}))\overline{\textbf{\emph{e}}}_j\geq_{gr} lm(\eta(x^{\gamma})\eta(x^{\alpha}))\overline{\textbf{\emph{e}}}_i$
\end{center}
i.e.,
\begin{center}
$\eta(lm(x^{\gamma}x^{\beta}))\overline{\textbf{\emph{e}}}_j\geq_{gr} \eta(lm(x^{\gamma}x^{\alpha}))\overline{\textbf{\emph{e}}}_i$.
\end{center}
This implies that $lm(x^{\gamma}x^{\beta})\textbf{\emph{e}}_j\geq lm(x^{\gamma}x^{\alpha})\textbf{\emph{e}}_i$.
Finally, it is easy to prove that $\geq$ is degree compatible.
\end{proof}
We are ready to prove the main theorem of this last section.
\begin{theorem}
Let $A$, $Gr(A)$, $Mon(A)$ and $Mon(Gr(A))$ be as before, $\geq$ a monomial order
over $Mon(A^m)$, and $M$ a nonzero submodule of $A^m$. The following statements hold:
\begin{enumerate}
\item[\rm (i)]  If $\overline{\mathcal{G}}=\{\textbf{G}_{j}\}_{j\in J}$ is a
Gröbner basis for $Gr(M)$, with respect to the monomial order $\geq_{gr}$, and
such basis is formed by homogeneous elements, then $\mathcal{G}:=\{\textbf{g}_{j}\}_{j\in J}$
is a Gröbner basis for $M$, where $\textbf{g}_{j}\in M$ is a selected vector
with the property that $\eta(\textbf{g}_{j})=\textbf{G}_{j}$ for each $j\in J$.
\item[\rm (ii)] If $\mathcal{G}=\{\textbf{g}_{i}\}_{i\in J}$ is a Gröbner basis for $M$, then
$\overline{\mathcal{G}}=\{\eta(\textbf{g}_{i})\}_{i\in J}$ is a Gröbner  basis of
$Gr(M)$ consisting of homogeneous elements.
\end{enumerate}
\end{theorem}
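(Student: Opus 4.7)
Both parts mirror the ideal case proved in Theorem \ref{basesgrad} and the theorem following it, with the submodule $M \subseteq A^m$ replacing the left ideal $I \subseteq A$. The plan is to invoke the module analogue of the characterization (iii) in Theorem \ref{teogrobnersigmapbw} and to translate divisibility and coefficient conditions across $\eta$ by means of Lemma \ref{lem3mod}.

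For (i), let $\textbf{0} \neq \textbf{f} \in M$ and pick $p \in \mathbb{N}$ with $\textbf{f} \in F_p(M) \setminus F_{p-1}(M)$. Then $\textbf{F} := \eta(\textbf{f}) \in Gr(M)_p$ is a nonzero homogeneous element. Applying the module Gr\"obner basis criterion to $\overline{\mathcal{G}}$ yields $\textbf{G}_{j_1}, \dots, \textbf{G}_{j_t} \in \overline{\mathcal{G}}$ and exponents $\alpha_l \in \mathbb{N}^n$ with $\alpha_l + \exp(lm(\textbf{G}_{j_l})) = \exp(lm(\textbf{F}))$ together with a membership condition on $lc(\textbf{F})$. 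Lemma \ref{lem3mod} gives $lm(\textbf{F}) = \eta(lm(\textbf{f}))$ and $lm(\textbf{G}_{j_l}) = \eta(lm(\textbf{g}_{j_l}))$, and by the very definition of $\geq_{gr}$ these equalities force $lm(\textbf{g}_{j_l}) \mid lm(\textbf{f})$ in $Mon(A^m)$ (the canonical vectors must agree, since $\eta$ of a monomial preserves its canonical component). Since also $lc(\textbf{h}) = lc(\eta(\textbf{h}))$ for every nonzero $\textbf{h}$ of pure degree, the coefficient condition for $\textbf{F}$ transfers verbatim to one for $\textbf{f}$, certifying that $\textbf{f}$ is reducible with respect to $\mathcal{G}$.

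For (ii), note first that $Gr(M)$ is a graded $Gr(A)$-submodule of $Gr(A)^m$ and that $\geq_{gr}$ is degree compatible, so for any nonzero $\textbf{F} \in Gr(M)$ the leading monomial $lm(\textbf{F})$ is the leading monomial of its top-degree homogeneous component. Hence it suffices to verify the criterion for nonzero homogeneous $\textbf{F} \in Gr(M)_p$, and such an $\textbf{F}$ is of the form $\eta(\textbf{f})$ for some $\textbf{f} \in M \cap F_p(M) \setminus F_{p-1}(M)$. Applying the Gr\"obner basis property of $\mathcal{G}$ to $\textbf{f}$ produces $\textbf{g}_{i_1}, \dots, \textbf{g}_{i_t} \in \mathcal{G}$ with $lm(\textbf{g}_{i_l}) \mid lm(\textbf{f})$ and the corresponding coefficient condition on $lc(\textbf{f})$. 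Using Lemma \ref{lem3mod} once more, these rewrite as $lm(\eta(\textbf{g}_{i_l})) \mid lm(\textbf{F})$ and $lc(\textbf{F}) = lc(\textbf{f})$, so $\overline{\mathcal{G}}$ reduces $\textbf{F}$ in $Gr(M)$.

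The main obstacle I expect is purely bookkeeping: checking that the constants $c_{\alpha_l, \textbf{g}_{j_l}}$ and $c_{\alpha_l, \textbf{G}_{j_l}}$ that Theorem \ref{coefficientes} attaches to the products $x^{\alpha_l}\cdot lm(\textbf{g}_{j_l})$ in $A^m$ and to $\eta(x)^{\alpha_l}\cdot lm(\textbf{G}_{j_l})$ in $Gr(A)^m$ really coincide, so that the coefficient-membership ideals are literally the same on both sides. This is precisely the content of Theorem \ref{grcua} (the structure constants defining $Gr(A)$ as a quasi-commutative skew $PBW$ extension are those of $A$) combined with Lemma \ref{lem2}; once one has stated this compatibility, nothing else in the argument is substantive.
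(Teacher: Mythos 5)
Your proposal is correct and follows essentially the same route as the paper's proof: both parts reduce to verifying the divisibility-plus-coefficient criterion of Theorem \ref{teogrobnersigmapbw} (in its module form) and transfer leading monomials and leading coefficients across $\eta$ via Lemma \ref{lem3mod}, with part (ii) first reducing to homogeneous elements of $Gr(M)$. Your closing remark on matching the structure constants $c_{\alpha_l,\boldsymbol{g}_{j_l}}$ and $c_{\alpha_l,\textbf{G}_{j_l}}$ via Theorem \ref{grcua} makes explicit a point the paper leaves implicit, but it does not change the argument.
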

\begin{proof}
(i) Let $\textbf{0}\neq \boldsymbol{f}\in F_{p}(M)\setminus F_{p-1}(M)$; we shall show that
the condition (iii) in Theorem \ref{teogrobnersigmapbw}, for module case, is satisfied (see \cite{Jimenez2}, Theorem 26): let
$\overline{\boldsymbol{f}}:=\eta(\boldsymbol{f})$, then $\textbf{0}\neq \overline{\boldsymbol{f}}\in
G(M)_{p}$. Since $\overline{\mathcal{G}}$ is a Gröbner basis of $G(M)$, there exist
$\textbf{\emph{G}}_{1},\ldots, \textbf{\emph{G}}_{t}\in \overline{\mathcal{G}}$
such that $lm(\textbf{\emph{G}}_{j})\mid lm(\overline{\boldsymbol{f}})$ for
each $1\leq j\leq t$ and $lc(\overline{\boldsymbol{f}})\in \langle
\sigma^{\alpha_{1}}(lc(\textbf{\emph{G}}_{1}))c_{\alpha_{1},\textbf{\emph{G}}_{1}},\ldots,
\sigma^{\alpha_{t}}(lc(\textbf{\emph{G}}_{t}))c_{\alpha_{t},\textbf{\emph{G}}_{t}}\}$,
with $\alpha_{j}\in \mathbb{N}^{n}$ such that
$\alpha_{j}+\exp(lm(\textbf{\emph{G}}_{j}))=\exp(lm(\overline{\boldsymbol{f}}))=p$
and $c_{\alpha_{j},\textbf{\emph{G}}_{j}}$ is
the coefficient determined by the product $\eta(x)^{\alpha_{j}}lm(\textbf{\emph{G}}_{j})$ in $Gr(M)$, for
$1\leq j\leq t$. But, $\exp(lm(\overline{\boldsymbol{f}}))=\exp(lm(\boldsymbol{f}))$, thus
of the above mentioned follows that
$lm(\eta(x^{\alpha_{j}})lm(\textbf{\emph{G}}_{j}))=lm(\overline{\boldsymbol{f}})$;
note that $lm(\eta(x^{\alpha_{j}})lm(\textbf{\emph{G}}_{j}))=lm(\eta(x^{\alpha_{j}}\boldsymbol{X}_{j}))$, where
$\boldsymbol{X}:=lm(\boldsymbol{g}_{j})$ and $\boldsymbol{g}_{j}\in F_{p}(M)$ is such that
$\eta(\boldsymbol{g}_{j})=\textbf{\emph{G}}_{j}$.
From Lemma \ref{lem3mod} we get that $lm(\eta(x^{\alpha_{j}}\boldsymbol{X}))=
\eta(lm(x^{\alpha_{j}}\boldsymbol{X})) \in F(M)_{p}/F(M)_{p-1}$, so that
$\eta(lm(x^{\alpha_{j}}\boldsymbol{X}))=lm(\overline{\boldsymbol{f}})=\eta(lm(\boldsymbol{f}))$.
The latter implies that
$lm(x^{\alpha_{j}}\boldsymbol{X})-lm(\boldsymbol{f})\in F_{p-1}(M)$ and, therefore,
$lm(x^{\alpha_{j}}\boldsymbol{X})=lm(\boldsymbol{f})$, i.e.,
$lm(\textbf{\emph{g}}_{j})\mid lm(\textbf{\emph{f}})$ for each $1\leq j\leq t$.
Further, $lc(h)=lc(\eta(\boldsymbol{h}))$ for all $\boldsymbol{h}\in A^m$,
then $lc(\boldsymbol{f})\in \langle
\sigma^{\alpha_{1}}(lc(\boldsymbol{g}_{1}))c_{\alpha_{1},\boldsymbol{g}_{1}},\ldots,
\sigma^{\alpha_{t}}(lc(\boldsymbol{g}_{t}))c_{\alpha_{t},\boldsymbol{g}_{t}}\}$.\\

(ii) Since $Gr(M)$ is a graded module, it suffices to show that every nonzero
homogeneous element $\textbf{\emph{F}}\in Gr(M)$ satisfies the condition (iii) in
the Theorem \ref{teogrobnersigmapbw} for module case.
Suppose that $\textbf{\emph{F}}\in Gr(M)_{p}$; then, $\textbf{\emph{F}}=\eta(\boldsymbol{f})$ for some
$\boldsymbol{f}\in F_{p}(M)-F_{p-1}(M)$ and there exist
$\boldsymbol{g}_1,\ldots,\boldsymbol{g}_t\in \mathcal{G}$ with the property that
$lm(\boldsymbol{g}_i)\mid lm(\boldsymbol{f})$
and $lc(\boldsymbol{f})\in \langle \sigma^{\alpha_1}(lc(\boldsymbol{g}_1))c_{\alpha_1,\boldsymbol{g}_1},\ldots,
 \sigma^{\alpha_t}(lc(\boldsymbol{g}_t))c_{\alpha_t,\boldsymbol{g}_t}\}$,
where $\alpha_{i}\in \mathbb{N}^n$ is such that
$\alpha_i+\exp(\boldsymbol{f}_{i})=\exp(\boldsymbol{f})$ for each $1\leq i\leq t$.
By Lemma \ref{lem3mod} we have that $lm(\boldsymbol{f})=lm(\eta(\boldsymbol{f}))=lm(\textbf{\emph{F}})$,
then $lm(\eta(\boldsymbol{g}_i))\mid lm(\textbf{\emph{F}})$ and,
since $lc(\boldsymbol{f})=lc(\eta(\boldsymbol{f}))=lc(\textbf{\emph{F}})$,
it follows that
$lc(\textbf{\emph{F}})\in \langle \sigma^{\alpha_1}(lc(\eta(\textbf{\emph{g}}_1))) c_{\alpha_1,\eta(\textbf{\emph{g}}_1)},\ldots,
\sigma^{\alpha_t}(lc(\eta(\textbf{\emph{g}}_t)))c_{\alpha_t,\eta(\textbf{\emph{g}}_t)}\}$ and, hence,
$\overline{\mathcal{G}}$ is a Gröbner basis for $Gr(M)$.
\end{proof}

%%%%%%%%%%%%%%%%%%%%%%%%%%%%%%%%%%%%%%%%%%%%%%%%%


\begin{thebibliography}{200}


\bibitem{Gomez-Torrecillas2}Bueso, J., Gómez-Torrecillas, J. and Verschoren, A., \textit{Algorithmic Methods in
noncommutative Algebra: Applications to Quantum Groups}, Kluwer, 2003.

\bibitem{Gallego6}Gallego, C., \emph{Matrix methods for projective modules over $\sigma$- PBW extensions},
Ph.D. thesis, Universidad Nacional de Colombia, Bogotá, 2015.

\bibitem{Gallego2}Gallego, C. and Lezama, O., \textit{Gröbner bases for ideals of skew $PBW$ extensions},
Communications in Algebra, 39, 2011, 50-75.

\bibitem{Gallego5}Gallego, C. and Lezama, O., \textit{Projective modules and Gröbner bases
for skew PBW extensions}, to appear in ``Algebraic and Symbolic Computation
Methods in Dynamical Systems'' in the Springer series ``Advances in Delays and Dynamics''.

\bibitem{Jimenez2}Jiménez, H. and Lezama, O., \textit{Gröbner bases for modules over sigma-PBW
extensions}, to appear in Acta Mathematica Academiae Paedagogicae Nyíregyháziensis, 31(3), 2015; arXiv:1501.07882 [math.RA].

\bibitem{lezamareyes1}Lezama, O. and Reyes, M., {\em Some homological properties of skew $PBW$
extensions}, Comm. in Algebra, 42, (2014), 1200-1230.

\bibitem{lezamapaiba} Lezama, O. and Paiba, M., \emph{Computing finite presentations of Tor and Ext over skew PBW extensions and some applications}, arXiv:1510.03446 [math.RA]
\bibitem{Li}Li, H, \textit{Noncommutative Gröbner Bases and Filtered-Graded Transfer},
Lecture Notes in Mathematics, Vol. 1795, Springer, 2002.

\bibitem{Nastasescu}Nastasescu, C. and Van Oystaeyen, F., \textit{Graded and Filtered Rings and Modules}, Lecture Notes in
Mathematics 758, Springer, 1979.

\end{thebibliography}
\end{document}